\newtheorem{definition}{Definition}[section]
\newtheorem{theorem}{Theorem}[section]
\newtheorem{lemma}{Lemma}[section]
\newtheorem*{maintheorem*}{Main Theorem}
\renewcommand{\i}{\ifmmode\mathit{\mathchar"7010 }\else\char"10 \fi}
\renewcommand{\j}{\ifmmode\mathit{\mathchar"7011 }\else\char"11 \fi}
\newcommand{\R}{\mathbb{R}}
\newcommand{\ddt}[1]{\frac{\mathrm{d}#1}{\mathrm{d}t}}
\newcommand{\pt}[1]{#1_t}
\newcommand{\ptt}[1]{#1_{tt}}
\newcommand{\px}[1]{#1_x }
\newcommand{\pxx}[1]{#1_{xx}}
\newcommand{\pxxx}[1]{#1_{xxx}}
\newcommand{\ptx}[1]{#1_{tx}}
\def\begi{\begin{itemize}}
\def\endi{\end{itemize}}
\def\bega{\begin{array}}
\def\enda{\end{array}}
\def\bel{\begin{equation}\label}
\def\eeq{\end{equation}}
\newenvironment{Assumptions}% Definition of assumptions
{%

\begin{enumerate}}%
{\end{enumerate}}
\newcommand{\Dtp}{D_t^+}
\newcommand{\Dp}{D_+}
\newcommand{\Dm}{D_-}
\newcommand{\DD}{D^2}
\newcommand{\normInf}[1]{\norm{#1}_\infty}
\newcommand{\normLinf}[1]{\norm{#1}_{L^\infty(\Omega)}}
\newcommand{\normLinfR}[1]{\norm{#1}_{L^\infty(\R)}}
\newcommand{\normOne}[1]{\norm{#1}_1}
\newcommand{\normLone}[1]{\norm{#1}_{L^1(\Omega)}}
\newcommand{\normBV}[1]{\abs{#1}_{BV}}
\newcommand{\wlin}{w^{\Delta t}}
\newcommand{\wcon}{\overline{w^{\Delta t}}}
\newcommand{\wthcon}{\overline{w^{\theta,\Delta t}}}
\newcommand{\zthcon}{\overline{z^{\theta,\Delta t}}}
\newcommand{\zcon}{\overline{z^{\Delta t}}}
\newcommand{\Br}{{\mathcal{B}_r}}
\DeclarePairedDelimiter\abs{\lvert}{\rvert}
\DeclarePairedDelimiter\norm{\lVert}{\rVert}
\begin{document}
\title[A Convergent Scheme for the Variational Heat Equation]{A Convergent Finite Difference Scheme for the Variational Heat Equation}

\author{G. M. Coclite}
\address[Giuseppe Maria Coclite]{\newline
 Dipartimento di Matematica, \newline Universit\`a di Bari, \newline
 Via E.~Orabona 4,I--70125 Bari, Italy.}
\email[]{giuseppemaria.coclite@uniba.it}

\author[J. Ridder]{J. Ridder}
\address[Johanna Ridder]{\newline
    Department of Mathematics,
    \newline University of Oslo,
  \newline P.O.Box NO-1053, Blindern, Oslo-0316, Norway.} 
\email[]{johanrid@math.uio.no}
   
\author[N. H. Risebro]{N. H. Risebro}
\address[Nils Henrik Risebro]{\newline
    Department of Mathematics,
    \newline University of Oslo,
  \newline P.O.Box NO-1053, Blindern, Oslo-0316, Norway.} 
\email[]{nilshr@math.uio.no}

 \thanks{G. M. Coclite is member of the Gruppo Nazionale  per l'Analisi Matematica, 
  la Probabilit\`a e le  loro Applicazioni (GNAMPA) of the Istituto Nazionale di Alta Matematica (INdAM)}

\date{\today}

\maketitle

%\tableofcontents
\begin{abstract}
The variational heat equation is a nonlinear, parabolic equation not in divergence form that arises as a model for the dynamics of the director field in a nematic liquid crystal. We present a finite difference scheme for a transformed, possibly degenerate version of this equation and prove that a subsequence of the numerical solutions converges to a weak solution. This result is supplemented by numerical examples that show that weak solutions are not unique and give some intuition about how to obtain the physically relevant solution.
\end{abstract}

\section{Introduction}
\label{sec:intro}
In this paper we investigate the Cauchy problem
\begin{equation}
\label{eq:VHE}
\begin{cases}
\pt u=c(u)\px{(c(u)\px u)},&\quad x\in\Omega,\,t>0\\
u(x,0)=u_0(x),&\quad x\in\Omega,
\end{cases}
\end{equation}
where $\Omega=\R$ or $\Omega=[0,1]$ with periodic boundary conditions. 
We assume that
\begin{Assumptions}
\item \label{ass:c} $c\in C^2(\R)$, $c\ge 0$, $\abs{\{\xi\,|\,c(\xi)=0\}}<\infty$, and, w.l.o.g., $c\le 1$,
\item \label{ass:init} $u_0\in W^{1,1}(\Omega)\cap W^{1,\infty}(\Omega)$, $u_{0,x}\in BV(\Omega)$.
\end{Assumptions}
We call~\eqref{eq:VHE} the ``variational heat equation'', because it can be derived from a variational principle, similar to the variational wave equation~\cite{HunterSaxton1991,Saxton1989,GlasseyHunterZheng1996,BressanZheng2006,ChenZheng2013}, see~\eqref{eq:varWave} below.

The variational heat equation arises in the context of the continuum theory for nematic liquid crystals as a model for the dynamics of the director field. Liquid crystals are materials in a state of matter between the solid and the liquid state. In the case of uniaxial nematic liquid crystals, this means that the elongated molecules can move freely like in a fluid, but tend to align along the same direction like in a crystal. On a macroscopic scale such a state can be described by two vector fields, the velocity field and the so-called director field, which are governed by the Ericksen-Leslie equations~\cite{Stewart2004-Book, Virga1994-Book, DeGennes1993-Book, Leslie1968, Leslie1979,Leslie1992,Ericksen1961}. The director field is a unit vector field that gives the average direction of the molecules at each point.

To arrive at equation~\eqref{eq:VHE}, we assume the simplified setting of a uniaxial nematic with no flow and a director field $\mathbf{n}$ that lies in the $x-y$ plane and varies only in $x$-direction. Then the director can be described by an angle $u$ as $\mathbf{n}=(\cos(u),\sin(u),0)$. The Oseen-Frank energy, which models the tendency of the director to align along the same direction everywhere, reduces to 
\begin{equation*}
  E=\int (c(u))^2 (\px u)^2\,dx\,,
\end{equation*} 
where 
\begin{align}\label{eq:cLC}
  c(u)=\sqrt{k_1 \cos(u)^2 + k_2\sin(u)^2}\,,
\end{align}
and $k_1$ and $k_2$ are the Oseen-Frank elastic constants corresponding to bend and splay deformations~\cite{Stewart2004-Book,DeGennes1993-Book,Virga1994-Book,Oseen1933,Frank1958}. In addition, the director is subject to the dissipation
\begin{equation*}
  D=\kappa \int (u_x)^2\,dx\,,
\end{equation*}
where $\kappa$ is the rotational viscosity coefficient. Together, a variational principle applied to the energy law
\begin{equation*}
  \ddt{} E=D\,,
\end{equation*}
and scaling $\kappa=1$ gives~\eqref{eq:VHE}, see~\cite{AursandRidder2015,AursandNapoliRidder2015}.

A similar model is the variational wave equation~\cite{HunterSaxton1991,Saxton1989},
\begin{equation}\label{eq:varWave}
  \ptt{u}=c(u)\px{(c(u)\px{u})}\,,
\end{equation}
which is derived in the same way from the Oseen-Frank energy, but neglecting dissipation and instead including inertia in the form of the kinetic energy
\begin{equation*}
  \int \sigma (\px{u})^2 \,dx\,,
\end{equation*}
where $\sigma$ is the rotational inertia of the director, scaled to $1$ in~\eqref{eq:varWave}. Typical values for the elastic constants $k_1$ and $k_2$ in~\eqref{eq:cLC} are of order $10^{-11}$--$10^{-12}$, the dissipation $\kappa$ is of order $10^{-1}$--$10^{-3}$, and the rotational inertia $\sigma$ is of order $10^{-13}$,~\cite{Stewart2004-Book,XuChangQingLei1987}. On small length scales, the term from the elastic energy and the dissipation can be of the same order. The inertia term however is usually dominated by the dissipation~\cite{AursandRidder2015}, therefore~\eqref{eq:VHE} is a more suitable model than~\eqref{eq:varWave} in most physical settings.

From a mathematical point of view, if $k_1$ and $k_2$ are strictly positive, i.e., $c>0$, equation~\eqref{eq:VHE} is a nonlinear, uniformly parabolic equation. While~\eqref{eq:varWave}, and also the combination of~\eqref{eq:VHE} and~\eqref{eq:varWave} where both $\pt{u}$ and $\ptt{u}$ are included, does not possess a unique classical solution~\cite{GlasseyHunterZheng1996,BressanZheng2006,ChenZheng2013}, standard theory of nonlinear parabolic equations guarantees well-posedness of~\eqref{eq:VHE}, see~\cite{Ladyzhenskaya-book1968}.

We are therefore interested in the degenerate case of~\eqref{eq:VHE} where $c$ is allowed to vanish at some points, i.e., if $c$ is given by~\eqref{eq:cLC}, in the case that $k_1=0$ or $k_2=0$. Solutions of degenerate parabolic equations are not necessarily smooth or unique, therefore new concepts of solutions, e.g., weak solutions, entropy solutions, or viscosity solutions are required.
In the case of~\eqref{eq:VHE}, a formal calculation shows that there is no maximum principle for $\px{u}$, but for $c(u)\px{u}$ (see Section~\ref{sec:bounds}). 
% since
% \begin{equation*}
%     \ptx{u}=(c^2(u)) \pxxx{u} + 2 (c^2(u))'\px{u}\pxx{u} + \frac{1}{2} (c^2(u))'' (\px{u})^3\,.
% \end{equation*}
% On the other hand, if we assume that $\px{(c(u)\pt{u})}=\pt{(c(u)\px{u})}$ holds, we have
% \begin{equation*}
%   \pt{(c(u)\px{u})}=\px{\big((c^2(u))\px{(c(u)\px{u})}\big)}\,,
% \end{equation*}
% which indiscates that $c(u)\px{u}$ is bounded. 
At points where $c(u)$ vanishes, this allows for gradient blow-up.

The goal of this paper is to design a convergent numerical scheme for~\eqref{eq:VHE}. The form of the right-hand side and the resulting lack of a gradient bound suggests that one should transform~\eqref{eq:VHE} first. %a transformation of~\eqref{eq:VHE} might be more suitable as the basis of a numerical scheme. 

One possibility to do this is to define
% One possibility to transform~\eqref{eq:VHE} is to define
\begin{equation}\label{eq:vTrafo}
  v=k_v(u)=\int^u \frac{1}{c(\xi)} \,d\xi\,.
\end{equation}
Then~\eqref{eq:VHE} becomes
\begin{equation}\label{eq:v}
  \pt{v}=\px{(c^2(\bar{k}_v(v)) \px{v})}\,,
\end{equation}
where $\bar{k}_v$ is the inverse of $k_v(u)$. For this equation it is straightforward to obtain an $L^2$ bound and one can also show uniqueness of weak solutions. If we assume $c>0$, a simple finite difference scheme based on central differences and averages in space can be shown to converge to a weak solution using Aubin-Lions lemma, see also~\cite{Ladyzhenskaya-book1985, Samarskii-book2001} for examples in a similar setting. If however $c=0$ for some $u$, then~\eqref{eq:vTrafo} is not necessarily finite and a bound on $\px{v}$  does not follow directly from the $L^2$ bound. 

An alternative transformation of~\eqref{eq:VHE} is
\begin{equation}\label{eq:wTrafo}
  w=k_w(u)=\int^u c(\xi)\,d\xi\,,
\end{equation} 
so $w$ satisfies
\begin{equation}\label{eq:w}
  \pt{w}=c^2(\bar{k}_w(w))\pxx{w}\,.
\end{equation}
The transformation $k_w$ and its inverse $\bar{k}_w$ are well-defined for any $c\ge 0$ if $c$ vanishes only on single points. It is also possible to show \emph{a priori} bounds for both $w$ and $\px{w}$ in $L^\infty$ and $BV$ (functions of bounded total variation), see  Section~\ref{sec:bounds}. However,~\eqref{eq:w} does not guarantee uniqueness of solutions. Indeed, Ughi et al.~\cite{Ughi1984,DalPassoLuckhaus1987,BertschDalPassoUghi1992} showed that for the special case where $c^2(\bar{k}_w(w))=w$, weak solutions of~\eqref{eq:w} (defined in a standard way, see Section~\ref{sec:definitions}) are not unique. To choose the physically relevant solution, they define ``viscosity solutions'' which are obtained by taking the limit of classical solutions of the equation with $c>0$ or suitable initial data. In the setting of~\eqref{eq:cLC}, these viscosity solutions correspond to sending $k_1$ or $k_2$ to $0$ or choosing the solution that corresponds to a solution of~\eqref{eq:v}.
Ughi et al.'s concept of viscosity solutions is not generally the same as Lions' theory of viscosity solutions for degenerate parabolic equations~\cite{CrandallIshiiLions1992,BertschDalPassoUghi1992}. The uniqueness theory of the latter is not applicable here, because the right-hand side of~\eqref{eq:w}, or~\eqref{eq:VHE}, is not proper.

The scheme that we will present in this paper discretizes~\eqref{eq:w}. Based on discrete versions of the $L^\infty$ and $BV$ bounds on $w$ and $\px{w}$, we use Kolmogorov's compactness theorem to show that the numerical approximations for both $w$ and $\px{w}$ converge strongly in $L^1(\Omega)$. The strong convergence of the derivative is important, because the weak formulation of~\eqref{eq:w} includes nonlinear terms in~$\px{w}$. Passing to the limit in the definition of the scheme, we prove that a subsequence of the numerical solutions converges to a weak solution as $\Delta x,\Delta t\rightarrow 0$.

Our numerical experiments confirm the nonunqiueness properties discussed above. If $k_1=0$ in~\eqref{eq:cLC} and the grid is chosen such that $c(u_0(x))$ is positive at every grid point, then the numerical solutions converge to Ughi et al.'s viscosity solution. This solution is the same as the one obtained by a method based on~\eqref{eq:v} and as the limit $k_1\rightarrow 0$ of solutions of the $w$-based scheme for any set of grid points. If however one of the grid points coincides with a zero of $c(u_0(x))$, we  get another solution which corresponds to a classical solution of~\eqref{eq:w}, ``glued together'' at the zeros of $c(u_0(x))$ with Dirichlet boundary conditions. Interpreted as solutions of~\eqref{eq:VHE}, this type of solutions shows clearly that the gradient is unbounded.

The rest of this paper is structured as follows: In Section~\ref{sec:definitions} we will define the scheme for~\eqref{eq:w}, introduce the notion of weak solutions, and state our convergence result. Section~\ref{sec:bounds} contains discrete \emph{a priori} bounds, which are based on Harten's lemma and motivated by formal calculations in the continuous case. Time continuity is shown in Section~\ref{sec:time} and the convergence proof is carried out in Section~\ref{sec:convergence}. In Section~\ref{sec:numerics} we present a series of numerical experiments that confirm the convergence result and highlight the nonuniqueness properties of~\eqref{eq:w}. 

\section{A numerical scheme for $w$ and the main result}\label{sec:definitions}
To be precise, let us restate~\eqref{eq:w} in the form that will be the basis of our scheme. 
Assume that
\begin{Assumptions}\setcounter{enumi}{2}
\item  \label{ass:B} $B\in C^2(\R)$ and $0\le B\le 1$,
\item \label{ass:winit} $w_0\in W^{1,1}(\Omega)\cap W^{1,\infty}(\Omega)$, $w_{0,x}\in BV(\Omega)$.
\end{Assumptions}
Then we want to solve
\begin{equation}\label{eq:weq}
\begin{cases}
  \pt{w}=B(w) \pxx{w}, &\quad t>0, x\in \Omega,\\
w(x,0)=w_0(x),&\quad x \in \Omega,
\end{cases}
\end{equation}
on $\Omega=\R$ or $[0,1]$ with periodic boundary conditions.

Equation~\eqref{eq:VHE} can be transformed to~\eqref{eq:weq} by defining $w$ as in~\eqref{eq:wTrafo}. If $u_0$ satisfies~\ref{ass:init}, then $w_0$ will satisfy~\ref{ass:winit}, but not vice versa. Similarly,~\ref{ass:B} follows from~\ref{ass:c}. As an example, if we choose $c$ according to~\eqref{eq:cLC} with $k_1=0$ and $k_2=1$, then  $k_w(u)=\abs{\sin(u)}$ and $B(w)=1-w^2$, see also Section~\ref{sec:numerics}. 

To define the scheme, let $\Omega$ be discretized by the equidistant grid points $x_j=j\Delta x$, $j=0,\dots, N$, and let $t^n= n\Delta t$ denote the time steps. If $\Omega=[0,1]$, we set periodic boundary conditions. We will implicitly assume that all functions are periodically extended outside of the domain, so that no boundary terms occur. 

A straightforward discretization of~\eqref{eq:weq} is 
\begin{equation}\label{eq:wScheme}
  \Dtp w^n_j=B(w^{n+\theta}_j)\DD w^{n+\theta}_j\,,
\end{equation}
where we used the difference quotients
\begin{align*}
&  \Dp a_j = \frac{1}{\Delta x} (a_{j+1}-a_j)\,, \quad &&\Dm a_j=\frac{1}{\Delta x} (a_j-a_{j-1})\,,\\
&  \Dtp a^n=\frac{1}{\Delta t} (a^{n+1}-a^n),\quad &&\DD a_j=\Dp\Dm a_j\,, 
\end{align*}
and the convex combination
\begin{equation*}
  w^{n+\theta}_j=\theta w^{n+1}_j + (1-\theta) w^n_j\,,\quad\text{where $\theta\in [0,1]$.}
\end{equation*}
For $\theta=0$, the scheme is explicit, for $\theta=1$, it is fully implicit, and for $\theta=\frac{1}{2} $ we have the Crank-Nicholson time discretization. In the fully implicit case of $\theta=1$, the scheme is unconditionally stable. Otherwise, we require that the time step $\Delta t$ and grid size $\Delta x$ satisfy the CFL condition
\begin{equation}\label{eq:CFL}
  \lambda=\frac{\Delta t}{(\Delta x)^2} < \frac{1}{2(1-\theta)}. 
\end{equation}
For the discrete derivatives $z^n_j=\Dp w^n_j$ and $y^n_j=\Dm z^n_j$, the scheme defined by~\eqref{eq:wScheme} becomes
\begin{align}
  \Dtp z^n_j&=\Dp (B(w^{n+\theta}_j) \Dm z^{n+\theta}_j)\,,\label{eq:zScheme}\\
  \Dtp y^n_j&= \DD (B(w^{n+\theta}_j) y^{n+\theta}_j)\,.\label{eq:yScheme}
\end{align}
We will use these forms below to get \emph{a priori} bounds on $w^n_j$.

For given initial data $w_0\in W^{2,1}\cap W^{1,\infty}$, define the discrete initial data
\begin{equation}\label{eq:ID}
  w^0_j=\frac{1}{\Delta x} \int_{x_{j-\frac{1}{2}}}^{x_{j+\frac{1}{2}}} w_0(x) \,dx\,.
\end{equation}
To get from the discrete approximations $w^n_j$ back to continuous functions, we use the piecewise linear and piecewise constant interpolations
\begin{align}
  \wlin(x,t)&=\frac{x_{j+1}-x}{\Delta x} w^n_j + \frac{x-x_j}{\Delta x} w^n_{j+1}\,,\label{eq:wlin}\\ 
  &\qquad\qquad\qquad\text{for $x\in [x_j,x_{j+1})$, $t\in [t^n,t^{n+1})$,}\notag\\
\wcon(x,t)&=w^n_j\,, \\ 
&\qquad\qquad\qquad\text{for $x\in [x_{j-\frac{1}{2}},x_{j+\frac{1}{2}})$, $t\in [t^n, t^{n+1})$},\notag\\
\zcon(x,t)&=\wlin_x(x,t)=\Dp w^n_j=z^n_j\,,\\ 
&\qquad\qquad\qquad\text{for $x\in [x_j,x_{j+1})$, $t\in [t^n,t^{n+1})$.}\notag
\end{align}

Our main result is the convergence of the numerical scheme. 
Since $B(w)$ is allowed to vanish, equation~\eqref{eq:weq} is a
degenerate parabolic equation and solutions are not necessarily
smooth. In particular, the derivative of $w$ may not be defined at every point. We will therefore prove convergence to weak solutions of~\eqref{eq:weq}.
\begin{definition}[Weak solutions of~\eqref{eq:weq}]\label{def:weakSolution}
A function $w\in\, L^\infty(0,\infty;H^1(\Omega))\times L^\infty(\Omega\times(0,\infty))$ is a weak solution of~\eqref{eq:weq} if it satisfies
\begin{equation}\label{eq:weakform}
  \int_0^\infty\int_\Omega w \pt{\phi} - B(w) \px{w}\px{\phi} - B'(w) (\px{w})^2 \phi \, dx dt + \int_\Omega w_0(x)\phi(x,0)\,dx=0\,, 
\end{equation}  
for all $\phi \in C^\infty_c(\Omega\times [0,\infty))$.
\end{definition}
The convergence result, which we will prove in Section~\ref{sec:convergence}, reads as follows.
\begin{theorem}\label{thm:convergence}
A subsequence of the interpolations $w_{\Delta t}$ of the solutions of the scheme defined by~\eqref{eq:wScheme} converges in $C([0,\infty),W^{1,1}(\Omega))$ to a weak solution of~\eqref{eq:weq} as defined in Definition~\ref{def:weakSolution}.  
% Furthermore, if the grid points for each $\Delta t$ are chosen such that \mbox{$B(w^0_j)>0$} for all $j$, then the whole sequence converges to the viscosity solution as defined in Definition~\ref{def:viscSolution}.
\end{theorem}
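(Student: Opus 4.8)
The plan is to obtain strong compactness for the interpolations $\wlin$ and their spatial derivatives $\zcon$, and then pass to the limit in a discrete weak formulation of the scheme. The backbone is the collection of \emph{a priori} bounds from Section~\ref{sec:bounds}: discrete $L^\infty$ and $BV$ bounds on $w^n_j$ together with $L^\infty$ and $BV$ bounds on the discrete derivative $z^n_j=\Dp w^n_j$. The $BV$ bound on $z^n_j$ is the crucial ingredient, since it gives spatial compactness not just for $w$ but for its gradient. First I would record that these bounds are uniform in $\Delta x,\Delta t$ and that, combined with the time-continuity estimate of Section~\ref{sec:time}, they yield equicontinuity in time of $\wlin$ (and of $\zcon$) as maps into $L^1(\Omega)$.

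Next I would apply Kolmogorov's compactness theorem. For $\wlin$, the uniform $BV$-in-space bound controls spatial translates, and the time-continuity estimate controls temporal translates, so along a subsequence $\wlin\to w$ in $C([0,\infty),L^1(\Omega))$, with $w\in L^\infty\cap\, L^\infty(0,\infty;H^1)$. For $\zcon$, the uniform $BV$-in-space bound on $z^n_j$ again controls spatial translates; temporal translates are controlled using~\eqref{eq:zScheme}, which expresses $\Dtp z^n_j$ as a discrete divergence of $B(w^{n+\theta})\Dm z^{n+\theta}$ and hence is small in the averaged sense Kolmogorov requires. Thus, passing to a further subsequence, $\zcon\to z$ strongly in $L^1_{\mathrm{loc}}$, and one identifies $z=\px w$ because $\zcon=\wlin_x$ and strong $L^1$ convergence of $\wlin$ forces the limit of the derivatives to be the distributional derivative of $w$. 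Strong (not merely weak) convergence of $\zcon$ is exactly what is needed to handle the nonlinear terms $B(w)\px w\,\px\phi$ and $B'(w)(\px w)^2\phi$ in~\eqref{eq:weakform}, since $B,B'$ are continuous and bounded by~\ref{ass:B} and the quadratic term requires $\px w$ convergent in a space compatible with squaring against the $L^\infty$ bound.

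With compactness in hand, I would multiply the scheme~\eqref{eq:wScheme} by a test function $\phi\in C^\infty_c(\Omega\times[0,\infty))$ sampled at grid points, sum by parts in space and in time (Abel summation, using periodicity so no boundary terms appear), and reorganize into a discrete analogue of~\eqref{eq:weakform}. The discrete initial term reproduces $\int_\Omega w_0\,\phi(\cdot,0)\,dx$ via the averaging definition~\eqref{eq:ID}. Summation by parts moves one discrete difference off the unknown onto $\phi$, producing the $w\,\pt\phi$ and $B(w)\px w\,\px\phi$ terms, while the remaining piece, after writing $\DD$ in terms of $\Dp$ and $\Dm$ of $z$, yields the $B'(w)(\px w)^2\phi$ term in the limit; this is where one uses $\Dp(B(w))\approx B'(w)\px w$ together with the strong convergence of $\zcon$ and the smoothness of $B$. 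I would then let $\Delta x,\Delta t\to0$ along the chosen subsequence, invoking strong $L^1$ convergence of $\wlin$ and $\zcon$, uniform boundedness to pass limits inside the continuous nonlinearities, and consistency of the difference quotients of $\phi$ with its derivatives.

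The main obstacle I anticipate is the limit passage in the term carrying $\DD w^{n+\theta}$, i.e.\ correctly producing the quadratic gradient term $B'(w)(\px w)^2\phi$. One must rewrite the discretization of $B(w^{n+\theta})\DD w^{n+\theta}$ through the conservative form~\eqref{eq:zScheme}, summation by parts introducing a discrete product $\Dp(B(w^{n+\theta}))\,\Dm z^{n+\theta}$, and then show that the discrete chain-rule factor $\Dp(B(w^{n+\theta}))$ converges to $B'(w)\px w$ in a topology that pairs with the strongly convergent $\zcon$ to give $(\px w)^2$. This step genuinely needs the $BV$ and strong $L^1$ control of $\zcon$, not merely weak convergence, together with care about the $\theta$-averaging in time (showing $w^{n+\theta}$ and $z^{n+\theta}$ share the same limit as $w^n$, which follows from the time-continuity estimate). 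The remaining steps are the standard, if lengthy, bookkeeping of summation by parts and consistency of difference quotients.
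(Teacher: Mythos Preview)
Your proposal is correct and follows essentially the same approach as the paper: Kolmogorov compactness for both $\wlin$ and $\zcon$ (the latter via the divergence form~\eqref{eq:zScheme} combined with Kru\v{z}kov's interpolation lemma), identification $z=\px w$, summation by parts against a sampled test function, and the discrete chain rule $\Dp B(w^{n+\theta}_j)=B'(w^{n+\theta}_j)\Dp w^{n+\theta}_j+\mathcal{O}(\Delta x)$ paired with strong $L^1$ convergence of $\zcon$ to pass the quadratic term. The paper also explicitly verifies that $\wcon$, $\wthcon$, and $\zthcon$ share the same limits as $\wlin$ and $\zcon$ (using~\eqref{eq:wconDiff} and the time-continuity lemmas), which you correctly flag as a point requiring care.
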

Note that only a subsequence of $w_{\Delta t}$ converges, because weak solutions of~\eqref{eq:weq} are not unique. We will comment more on this in Section~\ref{sec:numerics}.

For the a priori bounds in the next section, we will use the discrete norms
\begin{align*}
  \normInf{a^n}=\sup_j\, \abs{a^n_j}\,,\quad \normOne{a^n}=\Delta x \sum_j \abs{a^n_j}\,,\quad \normBV{a^n}=\sum_j \abs{a^n_j-a^n_{j-1}}\,,
\end{align*}
\section{A priori bounds}\label{sec:bounds}
In the following, we will show discrete maximum principles and $BV$ bounds for $w^n_j$ and $z^n_j=\Dp w^n_j$. Here, note that
the original equation~\eqref{eq:VHE} only possesses a maximum principle for $u$, but not for $\px{u}$, since in
\begin{equation*}
  \ptx{u} =(c(u))^2 \pxxx{u}+ 4 c(u) c'(u) \px{u} \pxx{u} + \frac{1}{2} (c^2(u))'' (\px{u})^3\,,
\end{equation*}
the third term can lead to growth of local maxima in $\px{u}$. Our numerical examples in Section~\ref{sec:numerics} confirm this.
One advantage of the transformation to $w$ is that for equation~\eqref{eq:weq} both $w$ and $z=\px{w}$ are bounded in $L^\infty$.

The $BV$ bound for $z$ will be important in the convergence proof, because strong convergence for both $w$ and its first derivative is needed to pass to the limit in the third term of the weak formulation~\eqref{eq:weakform}. Before turning to the discrete setting, 
let us show formally how $L^1$ bounds for $z$ and $y=\px{z}$ (i.e., $BV$ bounds for $w$ and $z$) can be obtained in the continuous case.

For $z$, multiply
\begin{equation*}
  \pt{z}=\px{(B \px{z})}
\end{equation*}
by $\eta'(z)$, where $\eta$ is some convex smooth function, and integrate in space to get
\begin{align*}
  \ddt{} \int_\Omega \eta(z)\,dx = - \int_\Omega B(w) (\px{z})^2 \eta''(z)\,dx \le 0. 
\end{align*}
Letting $\eta\rightarrow \abs{\cdot}$, we get an $L^1$ bound for $z$.

For $y$, the formal continuous equivalent of equation~\eqref{eq:yScheme} is
\begin{equation}\label{eq:yeq}
  \pt{y}=\pxx{(B(w)y)}.
\end{equation}
Again, let $\eta\in C^2(\R)$ be convex and multiply~\eqref{eq:yeq} by $\eta'(y)$. Then, 
\begin{align*}
  \pt{\eta(y)}&=(B \pxx{y}+ 2 \px{B} \px{y} + \pxx{B} y) \eta'(y)\\
&\leq   (\px{y})^2 B\eta''(y)+B \pxx{y} \eta'(y) + 2 \px{B} \px{\eta(y)} + \pxx{B} y\, \eta'(y)\\
&= B \pxx{\eta(y)} +  2 \px{B} \px{\eta(y)} + \pxx{B} y\, \eta'(y)\\
&= \px{(B \px{\eta(y)})} + \px{B}\px{\eta(y)} + \pxx{B} y\, \eta'(y)\\
&=\px{(B \px{\eta(y)})} + \px{(\px{B}\eta(y))} - \pxx{B}\eta(y) + \pxx{B} y\, \eta'(y)\\
&= \pxx{(B \eta(y))} + \pxx{B} (\eta'(y) y -\eta)\,. 
\end{align*}
Integrating over $\Omega$ and taking $\eta(y)=\abs{y}_\epsilon$ such that it converges to $\abs{y}$ as $\epsilon\rightarrow 0$, we get 
\begin{align*}
  \ddt{} \int_\Omega \abs{y}\le 0\,.
\end{align*}  

In the discrete case, we will base our proofs on an extended version of Harten's Lemma~\cite[p.~118]{HoldenRisebro-book}.
\begin{lemma}
Let $v_j$ be given by
\begin{equation}\label{eq:Harten1}
  v_j=u_j - A_{j-1/2}\Delta_- u_j + B_{j+1/2}\Delta_+ u_j - C_{j-1/2}\Delta_- v_j + D_{j+1/2} \Delta_+ v_j\,,
\end{equation}  
where $\Delta_{\pm}u_j=\pm (u_{j\pm 1}-u_j)$.
\begin{enumerate}[label=\textup{(\roman*)}]
  \item \label{cond:Harten1} If $A_{j+1/2}$, $B_{j+1/2}$, $C_{j+1/2}$, and $D_{j+1/2}$ are nonnegative for all $j$, and $A_{j+1/2}+B_{j+1/2}\le 1$ for all $j$, then
\begin{equation*}
  \normBV{v}\le \normBV{u}\,.
\end{equation*}
\item \label{cond:Harten2} If $A_{j+1/2}$, $B_{j+1/2}$, $C_{j+1/2}$, and $D_{j+1/2}$ are nonnegative for all $j$, and $A_{j-1/2}+B_{j+1/2}\le 1$ for all $j$, then
\begin{equation*}
  \min_i u_i\le v_j\le \max_i u_i
\end{equation*}
\end{enumerate}
\end{lemma}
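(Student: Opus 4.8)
The plan is to prove both parts of this extended Harten's Lemma by analyzing the coefficient structure of the implicit relation~\eqref{eq:Harten1}, treating it as a matrix equation and exploiting nonnegativity of the coefficients. The key observation is that~\eqref{eq:Harten1} expresses $v_j$ in terms of both $u$-differences and $v$-differences, so it is an \emph{implicit} scheme; the $C$ and $D$ terms couple neighboring values of $v$. My strategy is to isolate the explicit contribution and handle the implicit coupling via a maximum/extremum argument rather than inverting a matrix directly.

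\textbf{Part~\ref{cond:Harten2} (maximum principle).} First I would rewrite~\eqref{eq:Harten1} by expanding the differences, collecting the coefficients of $v_{j-1}$, $v_j$, $v_{j+1}$ on the left and of $u_{j-1}$, $u_j$, $u_{j+1}$ on the right. This gives a relation of the form
\begin{equation*}
  (1+C_{j-1/2}+D_{j+1/2})\,v_j = C_{j-1/2}\,v_{j-1} + D_{j+1/2}\,v_{j+1} + (\text{convex-type combination of } u_{j-1},u_j,u_{j+1}).
\end{equation*}
The coefficients of the $u$-terms are $A_{j-1/2}$, $1-A_{j-1/2}-B_{j+1/2}$, and $B_{j+1/2}$, which under the hypothesis $A_{j-1/2}+B_{j+1/2}\le 1$ (together with nonnegativity) are all nonnegative and sum to $1$; hence the $u$-part lies in $[\min_i u_i,\max_i u_i]$. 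To conclude the bound on $v_j$, I would argue at the index $j^*$ where $v$ attains its maximum: at that index $v_{j^*-1},v_{j^*+1}\le v_{j^*}$, so moving the $C$ and $D$ terms to the left shows $v_{j^*}$ is bounded above by the $u$-combination, hence by $\max_i u_i$. The symmetric argument at the minimizing index gives the lower bound. This extremum argument neatly sidesteps any need to invert the implicit system.

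\textbf{Part~\ref{cond:Harten1} (total variation bound).} Here I would form the difference $\Delta_+ v_j = v_{j+1}-v_j$ by subtracting two consecutive instances of~\eqref{eq:Harten1}, aiming to write $\Delta_+ v_j$ as a nonnegative-coefficient combination of $\Delta_\pm u$ and $\Delta_\pm v$ terms. Summing $\sum_j \abs{\Delta_+ v_j}$ and using the triangle inequality together with the nonnegativity of all coefficients (so that $\abs{\sum c_k a_k}\le\sum c_k\abs{a_k}$ when $c_k\ge0$), I expect the $A,B$ contributions to telescope into $\normBV{u}$ under the constraint $A_{j+1/2}+B_{j+1/2}\le1$, while the $C,D$ contributions reassemble into $\normBV{v}$ on the right with coefficients summing to at most their original weight, allowing them to be absorbed back to the left-hand side. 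The constant-coefficient case is the standard content of Harten's Lemma in~\cite{HoldenRisebro-book}; the variable-coefficient, implicit version requires only that the index shifts in the $C,D$ terms be tracked carefully so the absorption closes.

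\textbf{The main obstacle} I anticipate is the bookkeeping in Part~\ref{cond:Harten1}: the implicit $v$-differences must be rearranged so that, after applying the triangle inequality, the resulting $\normBV{v}$ terms can be moved to the left-hand side without leaving a residual positive multiple of $\normBV{v}$ that would spoil the inequality. This hinges on the precise half-integer index placement of $C$ and $D$ versus $A$ and $B$, and on the hypothesis being stated with $A_{j+1/2}+B_{j+1/2}\le1$ (shifted relative to Part~\ref{cond:Harten2}); getting the discrete summation-by-parts shifts exactly right is where the argument could silently go wrong, so I would verify the telescoping explicitly on the boundary-free (periodic) domain where no boundary terms intrude.
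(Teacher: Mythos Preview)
Your proposal is correct and follows essentially the same approach as the paper's proof: for part~\ref{cond:Harten2} the paper rewrites~\eqref{eq:Harten1} exactly as you describe and argues at the extremal index $j'$, and for part~\ref{cond:Harten1} it subtracts consecutive instances of~\eqref{eq:Harten1} to obtain an identity for $(1+C_{j+1/2}+D_{j+1/2})\Delta_+ v_j$ as a nonnegative-coefficient combination of $\Delta_\pm u$ and $\Delta_\pm v$, then sums absolute values so that the $A,B$ terms collapse to $\normBV{u}$ and the shifted $C,D$ terms on the right cancel exactly against those on the left. The index bookkeeping you flag as the main obstacle is indeed the only delicate point, and the paper handles it just as you anticipate.
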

\begin{proof}
From~\eqref{eq:Harten1}, we get
\begin{align*}
  (1+C_{j+1/2}+D_{j+1/2})\Delta_+ v_j&=(1-A_{j+1/2}-B_{j+1/2})\Delta_+u_j\\&\qquad{}+ A_{j-1/2}\Delta_- u_j + B_{j+3/2}\Delta_+u_{j+1}\\&\qquad {}+ C_{j-1/2}\Delta_-v_j + D_{j+3/2}\Delta_+v_{j+1}\,.
\end{align*}
Hence, under the assumptions of~\ref{cond:Harten1},
\begin{align*}
  \sum_j (1+C_{j+1/2}+D_{j+1/2}) \abs{\Delta_+ v_j}&\le \sum_j  (1-A_{j+1/2}-B_{j+1/2})\abs{\Delta_+u_j}  \\&\qquad {}+ \sum_j A_{j-1/2}\abs{\Delta_- u_j}+ B_{j+3/2}\abs{\Delta_+u_{j+1}} \\&\qquad{}+\sum_j C_{j-1/2}\abs{\Delta_-v_j} + D_{j+3/2}\abs{\Delta_+v_{j+1}}\\
&= \sum_j \abs{\Delta_+ u_j} + (C_{j+1/2}+D_{j+1/2})\abs{\Delta_+ v_j}\,,
\end{align*}
from which the $BV$ bound follows.

For the maximum principle, we can write~\eqref{eq:Harten1} as
\begin{align*}
    (1+C_{j-1/2}+D_{j+1/2}) v_j&=(1-A_{j-1/2}-B_{j+1/2}) u_j + A_{j-1/2}u_{j-1} + B_{j+1/2}u_{j+1}\\&\qquad {}+ C_{j-1/2}v_{j-1} + D_{j+1/2}v_{j+1}\,.
\end{align*}
Thus, if the assumptions of~\ref{cond:Harten2} hold, $v_{j'}=\max_i v_i$ satisfies
\begin{align*}
    (1+C_{j'-1/2}+D_{j'+1/2}) v_{j'}&\le (1-A_{j'-1/2}-B_{j'+1/2}) \max_i u_i\\&\qquad{}+ A_{j'-1/2}\max_i u_i + B_{j'+1/2}\max_i u_i\\&\qquad {}+ C_{j'-1/2} v_{j'} + D_{j'+1/2} v_{j'}\,,  
\end{align*}
and hence, $\max_i v_i =v_{j'}\le \max_i u_i$. Similarly, $\min_i v_i \ge \min_i u_i$, which concludes the proof.
\end{proof}

The $L^\infty$ and $BV$ bound for $w^n_j$ and $z^n_j$ follow directly from the above lemma.
\begin{lemma}\label{lem:bounds}
  Let $w^n_j$ be the solution of~\eqref{eq:wScheme} and $z^n_j=\Dp w^n_j$. Then
\begin{align*}
  \min_i w^0_i\le w^n_j\le \max_i w^0_i, \quad&\normBV{w^n}\le \normBV{w^0}, \\
\min_i z^0_i\le z^n_j\le \max_i z^0_i, \quad&\normBV{z^n}\le \normBV{z^0}.
\end{align*}
\end{lemma}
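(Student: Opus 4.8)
The statement says these bounds follow directly from the preceding (Harten) lemma, so the plan is to recast each of the two schemes into the form~\eqref{eq:Harten1} and then to verify that its hypotheses hold under~\ref{ass:B} and the CFL condition~\eqref{eq:CFL}.

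\textbf{The $w$-bounds.} First I would expand~\eqref{eq:wScheme} into an explicit update for $w^{n+1}_j$. Writing $\lambda=\Delta t/(\Delta x)^2$ and $b_j=B(w^{n+\theta}_j)$, using that $(\Delta x)^2\DD a_j=\Delta_+a_j-\Delta_-a_j$, and substituting $w^{n+\theta}=\theta w^{n+1}+(1-\theta)w^n$, the scheme becomes
\begin{align*}
  w^{n+1}_j &= w^n_j + \lambda(1-\theta)b_j\,\Delta_+w^n_j - \lambda(1-\theta)b_j\,\Delta_-w^n_j\\
  &\qquad{}+\lambda\theta b_j\,\Delta_+w^{n+1}_j - \lambda\theta b_j\,\Delta_-w^{n+1}_j.
\end{align*}
Setting $u=w^n$, $v=w^{n+1}$ and matching with~\eqref{eq:Harten1} reads off $A_{j-1/2}=B_{j+1/2}=\lambda(1-\theta)b_j$ and $C_{j-1/2}=D_{j+1/2}=\lambda\theta b_j$. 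All four are nonnegative because $B\ge0$, $\lambda>0$, $\theta\in[0,1]$. Since $b_j\le1$ by~\ref{ass:B}, the relevant sums in both parts of the lemma are bounded by $2\lambda(1-\theta)$, which is $<1$ precisely by~\eqref{eq:CFL}. Thus~\ref{cond:Harten2} gives $\min_i w^n_i\le w^{n+1}_j\le\max_i w^n_i$ and~\ref{cond:Harten1} gives $\normBV{w^{n+1}}\le\normBV{w^n}$; induction on $n$ yields the two $w$-estimates.

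\textbf{The $z$-bounds.} I would run the same argument on the derivative scheme~\eqref{eq:zScheme}. Expanding $\Dp(b_j\Dm z^{n+\theta}_j)$ in the same fashion gives
\begin{align*}
  z^{n+1}_j &= z^n_j + \lambda(1-\theta)b_{j+1}\,\Delta_+z^n_j - \lambda(1-\theta)b_j\,\Delta_-z^n_j\\
  &\qquad{}+\lambda\theta b_{j+1}\,\Delta_+z^{n+1}_j - \lambda\theta b_j\,\Delta_-z^{n+1}_j,
\end{align*}
so now $A_{j-1/2}=\lambda(1-\theta)b_j$, $B_{j+1/2}=\lambda(1-\theta)b_{j+1}$, $C_{j-1/2}=\lambda\theta b_j$, and $D_{j+1/2}=\lambda\theta b_{j+1}$. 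The sums $A_{j+1/2}+B_{j+1/2}=2\lambda(1-\theta)b_{j+1}$ and $A_{j-1/2}+B_{j+1/2}=\lambda(1-\theta)(b_j+b_{j+1})$ are again $\le2\lambda(1-\theta)<1$, and nonnegativity is immediate. Applying the lemma with $u=z^n$, $v=z^{n+1}$ and inducting on $n$ produces the $L^\infty$ and $BV$ bounds for $z$.

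\textbf{Where the care is needed.} The computation is mostly bookkeeping, and the two delicate points both live in the matching step. First, the half-index conventions: for $z$ the coefficient of $\Delta_+z^n_j$ carries $b_{j+1}$ while that of $\Delta_-z^n_j$ carries $b_j$, so $A$ and $B$ on a given edge need not coincide (unlike in the $w$-case), and one must shift indices correctly before invoking the sum conditions. Second, for $\theta>0$ the coefficients $b_j=B(w^{n+\theta}_j)$ depend on the unknown $w^{n+1}$ itself; I would note that existence of the implicit solution is presupposed (the lemma concerns ``the solution'' of~\eqref{eq:wScheme}) and, crucially, that the hypotheses of Harten's lemma hold regardless of the actual values attained, since they use only $0\le B\le1$ together with~\eqref{eq:CFL}.
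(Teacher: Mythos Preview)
Your proposal is correct and follows essentially the same route as the paper: rewrite~\eqref{eq:wScheme} and~\eqref{eq:zScheme} in the form~\eqref{eq:Harten1}, identify the same coefficients $A_{j-1/2}$, $B_{j+1/2}$, $C_{j-1/2}$, $D_{j+1/2}$, and invoke $0\le B\le 1$ together with the CFL condition~\eqref{eq:CFL} to verify the hypotheses of Harten's lemma. Your additional remarks on the half-index shift in the $z$-scheme and on the implicit dependence of $b_j$ on $w^{n+1}$ are accurate observations that the paper leaves implicit.
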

\begin{proof}
  Rewriting~\eqref{eq:wScheme}, we get
\begin{align*}
  w^{n+1}_j&=w^n_j+(1-\theta)\Delta t B(w^{n+\theta}_j)\DD w^n_j+\theta \Delta t B(w^{n+\theta}_j)\DD w^{n+1}_j.
\end{align*}
To apply Harten's lemma, set $v_j=w^{n+1}_j$, $u_j=w^n_j$, and
\begin{align*}
  A_{j-1/2}&=(1-\theta)\lambda B(w^{n+\theta}_j),&C_{j-1/2}&=\theta\lambda B(w^{n+\theta}_j),\\ 
  B_{j+1/2}&=(1-\theta)\lambda B(w^{n+\theta}_j),&D_{j+1/2}&=\theta\lambda B(w^{n+\theta}_j),
\end{align*}
where $\lambda=\Delta t/(\Delta x)^2$. Because $\lambda$ satisfies the CFL condition~\eqref{eq:CFL} and  $\theta$ and $B(w)$ take values in $[0,1]$, the assumptions of Harten's lemma hold and we get the maximum and $BV$ bound for $w^n_j$.

For $z$, write~\eqref{eq:zScheme} as
\begin{align*}
    z^{n+1}_j&=z^n_j + (1-\theta)\Delta t \Dp (B(w^{n+\theta}_j) \Dm z^n_j) +\theta \Delta t \Dp (B(w^{n+\theta}_j) \Dm z^{n+1}_j).
\end{align*} 
Set $v_j=z^{n+1}_j$, $u_j=z^n_j$, and
\begin{align*}
  A_{j-1/2}&=(1-\theta)\lambda B(w^{n+\theta}_j),&C_{j-1/2}&=\theta\lambda B(w^{n+\theta}_j),\\ 
  B_{j+1/2}&=(1-\theta)\lambda B(w^{n+\theta}_{j+1}),& D_{j+1/2}&=\theta\lambda B(w^{n+\theta}_{j+1}),
\end{align*}
in Harten's lemma. Again, due to the CFL condition and the bounds on $B$, the conditions are satisfied and the claim follows.
\end{proof}
\section{Continuity in time}\label{sec:time}
In order to show compactness, we will need continuity in time of both $\wlin$ and $\zcon$. For $\wlin$ this follows directly from the definition of the scheme and the $BV$ bound for $z$ above. %For $\zcon$ we will use Kru\v{z}kov's interpolation lemma.
\begin{lemma}\label{lem:wtime}
Let $\wlin$ be the interpolation~\eqref{eq:wlin} of the solutions $w^n_j$ of~\eqref{eq:wScheme}. Then, for any $t,t+\tau\ge 0$, 
\begin{equation*}
  \int_\Omega \abs{\wlin(x,t+\tau)-\wlin(x,t)} \,dx\le (\tau + \mathcal{O}(\Delta t))\normBV{z^0}+ \mathcal{O}(\Delta x)\normBV{w^0}.
\end{equation*}
\end{lemma}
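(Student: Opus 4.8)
The plan is to reduce the statement to a telescoping estimate over single time steps and then invoke the $BV$ bound on $z$ from Lemma~\ref{lem:bounds}. First I would exploit that $\wlin$ is piecewise constant in time: if $t\in[t^n,t^{n+1})$ and $t+\tau\in[t^m,t^{m+1})$, then $\wlin(\cdot,t+\tau)-\wlin(\cdot,t)=\wlin(\cdot,t^m)-\wlin(\cdot,t^n)$, so it suffices to bound the spatial $L^1$ norm of the difference of two interpolated grid functions. On each cell $[x_j,x_{j+1}]$ this difference is the linear interpolant of $a_j:=w^m_j-w^n_j$, so a cellwise computation gives $\int_\Omega|\wlin(\cdot,t^m)-\wlin(\cdot,t^n)|\,dx\le \Delta x\sum_j|w^m_j-w^n_j|$. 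If instead one passes through the piecewise constant interpolant $\wcon$, comparing $\wlin$ with $\wcon$ at both times costs $\mathcal{O}(\Delta x)\normBV{w^0}$ (bounded via the $BV$ bound on $w$) and reduces everything to the same discrete sum; this is the natural origin of the $\mathcal{O}(\Delta x)\normBV{w^0}$ term in the statement, which is a harmless slack since the direct interpolant estimate already avoids it.

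Next I would telescope $|w^m_j-w^n_j|\le\sum_{l=n}^{m-1}|w^{l+1}_j-w^l_j|$ and estimate a single step. Rewriting~\eqref{eq:wScheme} gives $w^{l+1}_j-w^l_j=\Delta t\,B(w^{l+\theta}_j)\DD w^{l+\theta}_j$, and the crucial algebraic identity is $\DD w^{l+\theta}_j=\Dm z^{l+\theta}_j=:y^{l+\theta}_j$, i.e. the second difference of $w$ equals the first difference of $z$. Summing over $j$ and using $0\le B\le 1$ from~\ref{ass:B} yields
\[
  \Delta x\sum_j\abs{w^{l+1}_j-w^l_j}\le \Delta t\,\Delta x\sum_j\abs{y^{l+\theta}_j}=\Delta t\,\normBV{z^{l+\theta}},
\]
where the last equality $\Delta x\sum_j|y^{l+\theta}_j|=\sum_j|z^{l+\theta}_j-z^{l+\theta}_{j-1}|=\normBV{z^{l+\theta}}$ is exactly the point that turns the single-step $L^1$ displacement into a $BV$ seminorm of $z$.

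Then I would close the estimate using Lemma~\ref{lem:bounds}: by convexity of the $BV$ seminorm, $\normBV{z^{l+\theta}}\le\theta\normBV{z^{l+1}}+(1-\theta)\normBV{z^l}\le\normBV{z^0}$. Summing the single-step bounds over $l=n,\dots,m-1$ gives $\Delta x\sum_j|w^m_j-w^n_j|\le(m-n)\Delta t\,\normBV{z^0}$, and since $t\in[t^n,t^{n+1})$ and $t+\tau\in[t^m,t^{m+1})$ force $(m-n)\Delta t=t^m-t^n\le\tau+\Delta t$, the factor $(\tau+\mathcal{O}(\Delta t))\normBV{z^0}$ appears. Combined with the interpolation comparison of the first paragraph, this is the claimed bound.

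I do not expect a genuine obstacle: the lemma is essentially a corollary of the already-established $BV$ bound on $z$. The only points requiring care are the discrete identity $\DD w=\Dm z=y$ that exposes the single-step $L^1$ displacement as $\Delta t\,\normBV{z^{l+\theta}}$, the convexity argument controlling the intermediate level $z^{l+\theta}$, and the elementary step-counting $(m-n)\Delta t\le\tau+\Delta t$ that produces the $\mathcal{O}(\Delta t)$ correction.
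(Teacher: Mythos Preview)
Your proposal is correct and follows essentially the same route as the paper: both pass through the piecewise constant interpolant $\wcon$ to produce the $\mathcal{O}(\Delta x)\normBV{w^0}$ term, then telescope in time and use $\DD w^{l+\theta}_j=\Dm z^{l+\theta}_j$ together with $0\le B\le 1$ and the $BV$ bound on $z$ from Lemma~\ref{lem:bounds} to obtain $(m-n)\Delta t\,\normBV{z^0}\le(\tau+\Delta t)\normBV{z^0}$. Your side remark that the direct cellwise bound $\int_\Omega|\wlin(\cdot,t^m)-\wlin(\cdot,t^n)|\,dx\le\Delta x\sum_j|w^m_j-w^n_j|$ already avoids the $\wcon$ detour (and hence the $\mathcal{O}(\Delta x)\normBV{w^0}$ slack) is a valid minor simplification that the paper does not exploit.
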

\begin{proof}
Using the piecewise constant interpolation~$\wcon$, we get
\begin{align}
  \int_\Omega \abs{\wlin(x,t+\tau)-\wlin(x,t)}\,dx &\le \int_\Omega \abs{\wlin(x,t+\tau)-\wcon(x,t+\tau)} \nonumber \\&\hphantom{\le\int_\Omega}\qquad{}+ \abs{\wlin(x,t)-\wcon(x,t)} \label{eq:time1} \\&\hphantom{\le\int_\Omega}\qquad{}+ \abs{\wcon(x,t+\tau)-\wcon(x,t)}\,dx\,.\nonumber
\end{align}
Regarding the first two terms on the right-hand side, note that for $t\in[t^n,t^{n+1})$,
\begin{equation}\label{eq:wconDiff}
  \begin{split}
  \int_\Omega \abs{\wlin(x,t)-\wcon(x,t)}\,dx&=\sum_j\int_{x_{j-\frac{1}{2}}}^{x_j} \abs{(x_j-x)\Dm w^n_j } dx\\&\hphantom{\sum_j}\qquad{}+ \int_{x_j}^{x_{j+\frac{1}{2}}} \abs{(x-x_j)\Dp w^n_j } dx\\
&= \frac{(\Delta x)^2}{4} \sum_j \abs{\Dp w^n_j}\\
                                   &=\frac{\Delta x}{4} \normBV{ w^n} \le \frac{\Delta x}{4} \normBV{w^0}\,,      
  \end{split}
\end{equation}
where the last inequality is due to Lemma~\ref{lem:bounds}.
For the last term in~\eqref{eq:time1}, let $m$, $n$ be such that $t+\tau\in [t^n,t^{n+1})$ and $t\in[t^m,t^{m+1})$. Using the $BV$ bound on $z$ from Lemma~\ref{lem:bounds}, we get
\begin{align*}
  \int_\Omega \abs{\wcon(x,t+\tau)-\wcon(x,t)}\,dx&=\sum_j \int_{x_{j-\frac{1}{2}}}^{x_{j+\frac{1}{2}}}\abs{w^n_j - w^m_j}\\
&=\Delta x \sum_j \sum_{k=m}^{n-1} \Delta t \abs{\Dtp w^k_j}\\
&=\Delta x \Delta t \sum_j \sum_{k=m}^{n-1} \abs{B(w^{k+\theta}_j) \Dm z^{k+\theta}_j}\\
&\le \Delta t (n-m) \normBV{z^0}= (\tau + \mathcal{O}(\Delta t)) \normBV{z^0}\,,
\end{align*}
and the claim follows.
\end{proof}
For $\zcon$, we will use a version of Kru\v{z}kov's interpolation lemma~\cite[p.~208, Lemma~4.11]{HoldenRisebro-book}, which gives continuity in time if for all $t_1,t_2\ge 0$ and $\phi\in C^\infty_0(\Br)$, where $\Br=[-r,r]\cap \Omega$, 
\begin{equation}\label{eq:kruzInt}
  \Big\lvert \int_\Br ((\zcon(x,t_2)-\zcon(x,t_1))\phi(x)\,dx \Big\rvert \le C_r  \norm{\phi'}_{L^\infty(\Br)}(\abs{t_2-t_1}+ \mathcal{O}(\Delta t)),
\end{equation}  
in addition to the $L^\infty$ and $BV$ bound from Lemma~\ref{lem:bounds}.
\begin{lemma}\label{lem:ztime}
Let $\zcon$ be the piecewise constant interpolation of $z^n_j=\Dp w^n_j$, where $w^n_j$ is the solution of~\eqref{eq:wScheme}. Then $\zcon$ satisfies for any $t,t+\tau\ge 0$, $r>0$, 
\begin{equation*}
  \int_\Br \abs{\zcon(x,t+\tau)-\zcon(x,t)}\le C_r \max(\normBV{z^0},1)(\sqrt{\abs{\tau}}+\frac{\Delta t}{\sqrt{\abs{\tau}}})\,,
\end{equation*}  
where $\Br=[-r,r]\cap \Omega$.
\end{lemma}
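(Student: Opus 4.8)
The plan is to reduce the statement to the weak time-continuity estimate~\eqref{eq:kruzInt} and then invoke the cited version of Kru\v{z}kov's interpolation lemma, which, combined with the $L^\infty$ and $BV$ bounds of Lemma~\ref{lem:bounds}, yields the claimed modulus of continuity. Essentially all the work therefore lies in verifying~\eqref{eq:kruzInt}: once it holds, the final estimate follows from the mollification argument built into Kru\v{z}kov's lemma. Concretely, one mollifies $\zcon$ in space at scale $\epsilon$, so that the spatial error $\int_\Br\abs{\zcon-\zcon\ast\omega_\epsilon}$ is controlled by $\epsilon\,\normBV{z^0}$ while testing the time difference against the (mollified) sign function, whose derivative is of size $1/\epsilon$, contributes $\tfrac1\epsilon\normBV{z^0}(\abs{\tau}+\Delta t)$ via~\eqref{eq:kruzInt}; choosing $\epsilon\sim\sqrt{\abs{\tau}}$ then produces precisely the two terms $\sqrt{\abs{\tau}}$ and $\Delta t/\sqrt{\abs{\tau}}$.

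To establish~\eqref{eq:kruzInt}, fix $\phi\in C^\infty_0(\Br)$ and, without loss of generality, assume $t_1\le t_2$; choose $m,n$ with $t_1\in[t^m,t^{m+1})$ and $t_2\in[t^n,t^{n+1})$, so that $\zcon(\cdot,t_1)=z^m$ and $\zcon(\cdot,t_2)=z^n$ on each cell. Writing $\bar\phi_j=\int_{x_j}^{x_{j+1}}\phi\,dx$, the integral becomes $\sum_j(z^n_j-z^m_j)\bar\phi_j$. I would then telescope in time and insert the scheme~\eqref{eq:zScheme},
\[
z^n_j-z^m_j=\Delta t\sum_{k=m}^{n-1}\Dp\bigl(B(w^{k+\theta}_j)\,\Dm z^{k+\theta}_j\bigr),
\]
and perform one discrete summation by parts (no boundary terms, by periodicity and the compact support of $\phi$) to move the outer difference onto the test function:
\[
\sum_j(z^n_j-z^m_j)\bar\phi_j=-\Delta t\sum_{k=m}^{n-1}\sum_j B(w^{k+\theta}_j)\,\Dm z^{k+\theta}_j\,\Dm\bar\phi_j.
\]

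The estimate then closes using $0\le B\le 1$, the elementary bound $\abs{\Dm\bar\phi_j}\le\Delta x\,\norm{\phi'}_{L^\infty(\Br)}$ (which follows from $\bar\phi_j-\bar\phi_{j-1}=\int_{x_{j-1}}^{x_j}(\phi(s+\Delta x)-\phi(s))\,ds=\mathcal{O}(\Delta x^2)$), and the $BV$ bound $\normBV{z^{k+\theta}}\le\normBV{z^0}$ of Lemma~\ref{lem:bounds}. The delicate point — and the one I would watch most carefully — is the power counting in $\Delta x$: the factor $\Delta x$ in $\abs{\Dm\bar\phi_j}$ must cancel the $1/\Delta x$ that appears when $\sum_j\abs{\Dm z^{k+\theta}_j}$ is rewritten as $\normBV{z^{k+\theta}}/\Delta x$, so that each time slice contributes exactly $\norm{\phi'}_{L^\infty(\Br)}\normBV{z^0}$ with no surviving negative power of $\Delta x$.

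Summing over $k$ then yields a factor $(n-m)\Delta t=t^n-t^m\le\abs{t_2-t_1}+\Delta t$, hence
\[
\Big\lvert\int_\Br(\zcon(x,t_2)-\zcon(x,t_1))\phi(x)\,dx\Big\rvert\le\normBV{z^0}\,\norm{\phi'}_{L^\infty(\Br)}\bigl(\abs{t_2-t_1}+\mathcal{O}(\Delta t)\bigr),
\]
which is precisely~\eqref{eq:kruzInt}. Feeding this estimate, together with the uniform $L^\infty$ and $BV$ bounds of Lemma~\ref{lem:bounds}, into Kru\v{z}kov's interpolation lemma completes the proof; the constant $C_r$ and the factor $\max(\normBV{z^0},1)$ arise from that lemma's localization to $\Br$ and its combination of the spatial and temporal constants. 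The main obstacle is thus not conceptual but bookkeeping: carrying out the summation by parts and the $\Delta x$ power-counting so that~\eqref{eq:kruzInt} comes out with the right linear dependence on $\abs{t_2-t_1}$ and $\norm{\phi'}_{L^\infty(\Br)}$.
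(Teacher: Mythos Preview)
Your proposal is correct and follows essentially the same route as the paper: verify the weak time-continuity estimate~\eqref{eq:kruzInt} by inserting the scheme~\eqref{eq:zScheme}, performing one summation by parts onto the cell averages $\bar\phi_j=\int_{x_j}^{x_{j+1}}\phi$, bounding $\abs{\Dm\bar\phi_j}\le\Delta x\,\norm{\phi'}_{L^\infty}$, and using $\normBV{z^{k+\theta}}\le\normBV{z^0}$; then apply Kru\v{z}kov's interpolation lemma with $\epsilon=\sqrt{\abs{\tau}}$. The only cosmetic differences are that the paper first proves the one-step bound and then telescopes, whereas you telescope first, and that you spell out the mollification mechanism behind Kru\v{z}kov's lemma explicitly.
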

\begin{proof}
  To apply Kru\v{z}kov's interpolation lemma, we need to show~\eqref{eq:kruzInt}. First, note that for any time step $n$,
\begin{align*}
  \Big\lvert \int_\Omega (\zcon(x,t^{n+1})-\zcon(x,t^n))\phi\,dx\Big\rvert&=\Big\lvert \sum_j (z^{n+1}_j-z^n_j)\int_{x_j}^{x_{j+1}}\phi\,dx\Big\rvert\\
&= \Big\lvert \sum_j \Delta t \Dp(B(w^{n+\theta}_j) \Dm z^{n+\theta}_j)\int_{x_j}^{x_{j+1}}\phi\,dx\Big\rvert\\
&= \Big\lvert \sum_j \Delta t B(w^{n+\theta}_j) \Dm z^{n+\theta}_j \frac{1}{\Delta x} \int_{x_j}^{x_{j+1}}\phi(x)-\phi(x-\Delta x)\,dx\Big\rvert\\
&\le \sum_j \Delta t \Delta x \abs{\Dm z^{n+\theta}_j} \normLinf{\phi'}\\
&\le \Delta t \normLinf{\phi'} \normBV{z^0}\,.
\end{align*}
For given $t_1,t_2>0$, let $n,m$ be such that $t_1\in [t^n,t^{n+1})$ and $t_2\in [t^m,t^{m+1})$. The above estimate yields
\begin{align*}
  \Big\lvert \int_\Omega (\zcon(x,t_2)- \zcon(x,t_1)) \phi \,dx\Big\rvert &\le   \normLinf{\phi'}\normBV{z^0}(t^m-t^n)\\
&\le  \normLinf{\phi'}\normBV{z^0}(t_2-t_1 + 2\Delta t)\,.
\end{align*}
Kru\v{z}kov's interpolation lemma~\cite[p.~208, Lemma~4.11]{HoldenRisebro-book} then implies
\begin{equation*}
  \int_\Br \abs{\zcon(x,t+\tau)-\zcon{x,t}}\,dx \le C_r (\epsilon + \epsilon \normBV{z^0} + \normBV{z^0} \frac{\abs{\tau}+2\Delta t}{\epsilon} ),
\end{equation*}
for any $\epsilon>0$. Choosing $\epsilon=\sqrt{\abs{\tau}}$, we arrive at the claim.
\end{proof}
\section{Convergence}\label{sec:convergence}
Finally, we are able to prove the convergence of the scheme, Theorem~\ref{thm:convergence}.
\begin{proof}[Proof of Theorem~\ref{thm:convergence}]
  We will apply Kolomogorov's compactness theorem~\cite[Thm.~A.11, p.~437]{HoldenRisebro-book} twice, first on $\wlin$ and then on $\zcon$, to get a subsequence of $\wlin$ that converges strongly in $C([0,\infty), W^{1,1}(\Omega))$.

For the compactness of $\wlin$, the $L^\infty$ and $BV$ bound on $w^n_j$ from Lemma~\ref{lem:bounds} imply, for $t\in[t^n,t^{n+1})$,
\begin{align*}
 & \normLinf{\wlin(t)}\le \normInf{w^n}\le \normInf{w^0}\le \normLinf{w_0}\le C\,,\\
&\normBV{\wlin(t)}=\normBV{w^n}\le \normBV{w^0}\le C \normLone{w_0'}\le C\,,
%\normBV{\wlin(t)}=\normLone{\zcon(t)}=\normOne{z^n}\le \normOne{z^0}\le C \normLone{w'_0(x)}\le C\,,
\end{align*}
where the constants on the right-hand side are independent of $\Delta t$.
Together with the time continuity from Lemma~\ref{lem:wtime}, Kolmogorov's theorem guarantees that a subsequence of $\wlin$ converges in $C([0,\infty), L^1(\Omega))$.

Similarly, for $\zcon$, we have from Lemma~\ref{lem:bounds},
\begin{align*}
 & \normLinf{\zcon(t)}=\normInf{z^n}\le \normInf{z^0}\le \normLinf{w'_0}\le C\,,\\
&\normBV{\zcon(t)}=\normBV{z^n}\le \normBV{z^0}\le C\normBV{w_0'}\le C\,.%=\normOne{D^2w^0}\le C \normLone{w''_0(x)}\le C\,.
\end{align*}
Because of the time continuity of $\zcon$ from Lemma~\ref{lem:ztime} and Kolomogorov's theorem, we can thus take another subsequence (for simplicity, we omit the subindices in the following) such that both $\wlin$ and $\zcon$ converge in $C([0,\infty), L^1(\Omega))$. Let $w$ and $z$ denote the corresponding limits.

For the piecewise constant interpolation $\wcon$, recall from~\eqref{eq:wconDiff} that for any $t\ge 0$,
\begin{equation*}
  \normLone{\wlin(t)-\wcon(t)}\le C \Delta x\,,
\end{equation*}
where $C$ is independent of $t$. Hence, also $\wcon$ converges to $w$ in $C([0,\infty), L^1(\Omega))$. 
Moreover, if we define
\begin{align*}
  \wthcon&=\theta\, \wcon(\cdot+\Delta t)+(1-\theta)\,\wcon,\\
\zthcon&=\theta\,\zcon(\cdot+\Delta t)+ (1-\theta)\,\zcon,
\end{align*}
then due to the time continuity from Lemma~\ref{lem:wtime} and~\ref{lem:ztime}, also $\wthcon$ and $\zthcon$ converge in $C([0,\infty),L^1(\Omega))$ to $w$ and $z$, respectively.

Because $\Dp w^n_j=z^n_j$, we have that for any $\phi_j=\phi(x_j)$, $\phi \in C^\infty_c(\Omega)$,
\begin{equation*}
  \sum_j w^n_j \Dm \phi_j = - \sum_j z^n_j \phi_j\,.
\end{equation*} 
Passing to the limit, we get
\begin{equation*}
  \int_\Omega w \px{\phi} \,dx=- \int_\Omega z \phi \, dx\,,
\end{equation*}
i.e., $\px{w}=z$.

Next, let $\phi$ be a test function in $C^\infty_c(\Omega\times [0,\infty))$ and set $\phi^n_j=\phi(x_j,t^n)$. Multiplying the equation of the scheme,~\eqref{eq:wScheme}, by $\phi^n_j$ and summing in $j$ and~$n$, we get
\begin{equation*}
  \sum_{n\ge 0}\sum_j \Dtp w^n_j \, \phi^n_j = \sum_{n\ge 0}\sum_j B(w^{n+\theta}_j) \DD w^{n+\theta}_j\, \phi^n_j\,,
\end{equation*}
which is the same as
\begin{align*}
  \sum_{n\ge 0 }\sum_j w^{n+1}_j \Dtp \phi^n_j + \sum_j w^0_j \phi^0_j &= \sum_{n\ge 0}\sum_j \Dp w^{n+\theta}_j \Dp \phi^n_j B(w^{n+\theta}_j)\\&\hphantom{\sum_{n\ge 0}\sum_j}\qquad{}+ \Dp w^{n+\theta}_j \Dp B(w^{n+\theta}_j) \phi^n_{j+1}\,,
\end{align*}
or
\begin{multline}\label{eq:conv1}
  \int_0^\infty \int_\Omega \wcon(t+\Delta t) \overline{\Dtp \phi^{\Delta t}}\,dxdt + \int_\Omega \wcon(x,0)\overline{\phi^{\Delta t}}(x,0)\,dx \\ =\int_0^\infty \int_\Omega \zthcon\, \overline{\Dp \phi^{\Delta t}} B(\wthcon) + \zthcon\, \overline{\Dp B(w^{\theta,\Delta t})}\,\overline{\phi^{\Delta t}}(x+\Delta x,t)\,dxdt\,,
\end{multline}
where $\overline{\Dtp \phi^{\Delta t}}$, $\overline{\phi^{\Delta t}}$, etc. denote the piecewise constant interpolations corresponding to $\Dtp \phi^n_j$, $\phi^n_j$, etc.

Since $\phi\in C^\infty_c([0,\infty)\times \Omega)$, we have that $\overline{\Dtp \phi^{\Delta t}}$, $\overline{\phi^{\Delta t}}$, $\overline{\Dp \phi^{\Delta t}}$ converge in $L^\infty(\Omega\times[0,\infty))$ to $\pt{\phi}$, $\phi$, and $\px{\phi}$, respectively. Furthermore, by the construction of the initial data,~\eqref{eq:ID}, 
\begin{equation*}
  \normLinf{\wcon(\cdot,0)-w_0}\le \Delta x \normLone{w'_0}\rightarrow 0\,,\quad \text{as $\Delta x\rightarrow 0$.}
\end{equation*}
It follows that the left-hand side of~\eqref{eq:conv1} converges to
\begin{equation*}\label{eq:convLHS}
  \int_0^\infty \int_\Omega w (x,t)\pt{\phi}(x,t) \,dxdt + \int_\Omega w_0 (x)\phi(x,0)\,dx.
\end{equation*}

For the right-hand side, since $B\in C^2(\R)$, the convergence of $\wthcon$ also implies the convergence of $B(\wthcon)$ to $B(w)$ and of $B'(\wthcon)$ to $B'(w)$. Furthermore,
\begin{equation*}
  \Dp B(w^{n+\theta}_j)= B'(w^{n+\theta}_j) \Dp w^{n+\theta}_j + \frac{\Delta x}{2} B''(\xi) (\Dp w^{n+\theta}_j)^2\,,
\end{equation*}
for some $\xi\in\R$, so
\begin{align*}
\normLone{\overline{\Dp B(w^{\theta,\Delta t})}-B'(w) \px{w}}&\le \normLone{B'(\wthcon)\zthcon - B'(w)\px{w}}\\&\qquad {}+ \frac{\Delta x}{2} \normLinfR{B''}\normLone{\zthcon}\normLinf{\zthcon}\\
&\le \normLone{B'(\wthcon)-B'(w)}\normLinf{\zthcon} \\&\qquad {}+ \normLinf{B'(w)}\normLone{\zthcon-\px{w}}\\ &\qquad+  \frac{\Delta x}{2} \normLinfR{B''}\normLone{\zthcon}\normLinf{\zthcon}\\
&\rightarrow 0,
\end{align*}
uniformly in $t$ as $\Delta x\rightarrow 0$.
Altogether, this allows us to pass to the limit also on the right-hand side of~\eqref{eq:conv1} to get
\begin{align*}
  \int_0^\infty \int_\Omega B(w) \px{w} \px{\phi} + B'(w) (\px{w})^2 \phi\,dxdt,
\end{align*}
which together with~\eqref{eq:convLHS} gives the weak formulation~\eqref{eq:weakform}.
% It remains to show that if $B(w^0_j)>0$ for all $j$ and $\Delta t$, then the limit~$w$ obtained above is a viscosity solution. To do this, multiply the discrete viscosity form~\eqref{eq:discvisc} by $\phi^n_j$ and sum in $j$ and $n$ to get
% \begin{equation}\label{eq:viscon}
%   \Delta t \Delta x\sum_{n\ge 0} \sum_j \Dtp \H(w^n_j) \phi^n_j = \Delta t\Delta x \sum_{n\ge 0}\sum_j D^2(w^n_j) \phi^n_j - \frac{\Delta t}{2}  (\Dtp w^n_j)^2 \H''(\xi^n_j)\phi^n_j\,.
% \end{equation}
% From the CFL condition~\eqref{eq:CFL} and Lemma~\ref{lem:zltwo}, we have that
% \begin{equation*}
%   \Delta x \Delta t \sum_{n\ge 0}\sum_j B(w^n_j) (\Dm z^n_j)^2 \le C\,.
% \end{equation*}
% Inserting the definition of the scheme inthe last term of~\eqref{eq:viscon} and using summation by parts, we find
% \begin{align*}
% \Big\lvert \Delta t\Delta x \sum_{n\ge 0} &\sum_{j} \H(w^{n+1}_j) \Dtp \phi^n_j - \Dp w^n_j \Dm \phi^n_j + \Delta x \sum_j \H(w^0_j) \phi^0_j\Big\rvert\\& \le \frac{\Delta t}{2} \normLinfR{\H''}\normLinft{\phi} \Delta t\Delta x \sum_{n\ge 0}\sum_j (B(w^n_j) (\Dm z^n_j)^2)^2\\
% &\le C \Delta t \,.  
% \end{align*}
% Passing to the limit as in~\eqref{eq:conv1}, we arrive at
% \begin{equation*}
%   \int_0^\infty \int_\Omega \H(w) \pt{\phi} - \px{w}\px{\phi}\,dxdt + \int_\Omega H(w_0(x))\phi(x,0)\,dx =0\,.
% \end{equation*}
% TODO: Continuity of $\px{w}$. 
% \todo{check other conditions on $H(w)$ and $\px{w}$}
\end{proof}
% \section{Uniqueness}\label{sec:uniqueness}
% TODO
\section{Numerical experiments}\label{sec:numerics}
As mentioned in the introduction, weak solutions of~\eqref{eq:weq} are not necessarily unique, see also the analysis of Ughi et al.~\cite{Ughi1984,DalPassoLuckhaus1987,BertschDalPassoUghi1992} for the special case $B(w)=w$. The following experiments show the nonuniqueness for $B(w)=c^2(\bar{k}_w(w))$, where $c$ is given by~\eqref{eq:cLC} with $k_1=0$ and $k_2=1$, i.e., $c^2(u)=\sin^2(u)$. Then the transformation from $u\in [0,\pi]$ to $w$ is given by
\begin{equation*}
  w=k_w(u)=\int_{\frac{\pi}{2} }^{u} c(\xi)\, d\xi=-\cos(u)\,,
\end{equation*}
so 
\begin{equation*}
  B(w)=c^2(\bar{k}_w(w))=\sin^2(\arccos(-w))=1-w^2\,.
\end{equation*}

In the first series of experiments below we will construct the ``viscosity solution'' of Ughi et al. This is achieved by choosing grid points such that $\abs{w_0(x_j)}<1$, i.e., $B(w_0(x_j))\neq 0$ for all $j$. We will see that in this case the method converges and the limit is the same as the limit that one obtains by letting $k_1\rightarrow 0$ (for any set of grid points) or using a method for $v$ based on~\eqref{eq:v}.

Let the initial data be given by
\begin{equation*}
  u_0(x)= \begin{cases} -2\pi x + \frac{\pi}{2} \,, & \text{for $x\in [0,\frac{1}{4} ]$,}\\
    2\pi x- \frac{\pi}{2} \,, &\text{for $x\in [\frac{1}{4} ,\frac{3}{4} ]$,}\\ -2\pi x + \frac{5}{2} \pi\,,&\text{for $x \in [\frac{3}{4} ,1]$,} \end{cases}
\end{equation*}
i.e., 
\begin{equation*}
 w_0(x)=-\sin(2\pi x)\,, \quad \text{for $x\in [0,1]$,}
\end{equation*}
or 
\begin{equation*}
v_0(x)=\begin{cases} -\tan(2\pi x) \,, & \text{for $x\in [0,\frac{1}{4} ]$,}\\
    \tan(2\pi x)\,, &\text{for $x\in [\frac{1}{4} ,\frac{3}{4} ]$,}\\
-\tan(2\pi x)\,,&\text{for $x \in [\frac{3}{4} ,1]$,} \end{cases}  
\end{equation*}
where $v_0=\int_{\pi/2}^{u_0}\frac{1}{c(\xi)}\,d\xi$, see also Figure~\ref{fig:ID}. In all of the following experiments we will construct the discrete initial data directly by setting $w^0_j=w_0(x_j)$ ($v^0_j=v_0(x_j)$ for the $v$-based scheme), instead of using~\eqref{eq:ID}.
\begin{figure}
  \centering
\begin{subfigure}[b]{0.3\textwidth}\centering
  \includegraphics[width=\textwidth]{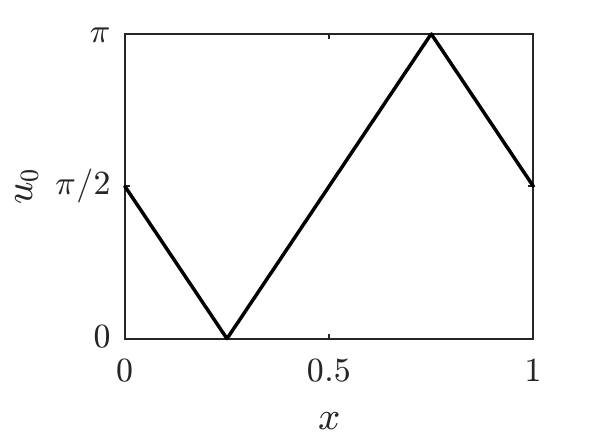}
\caption{$u_0$}
\end{subfigure}
\begin{subfigure}[b]{0.3\textwidth}\centering
  \includegraphics[width=\textwidth]{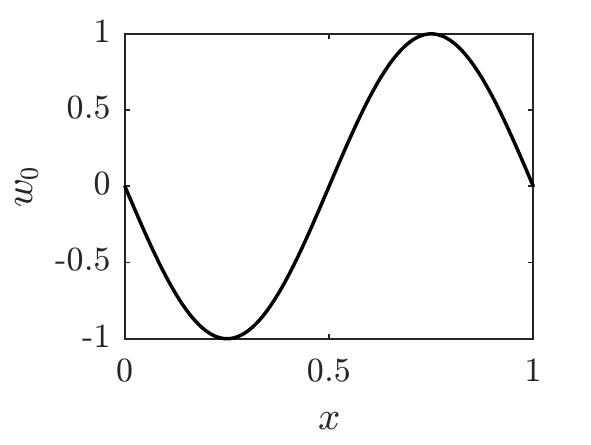}
\caption{$w_0$}
\end{subfigure}
\begin{subfigure}[b]{0.3\textwidth}\centering
  \includegraphics[width=\textwidth]{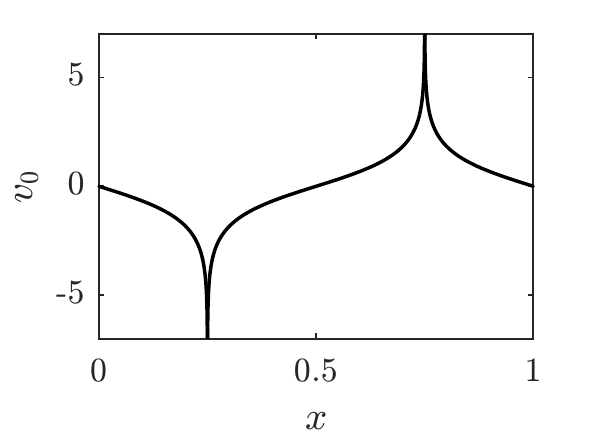}
\caption{$v_0$}
\end{subfigure}
  \caption{The initial data}
\label{fig:ID}
\end{figure}
 
Let $N$ be an odd number, so the grid points $x_j=j/N$ do not coincide with the critical points $1/4$ and $3/4$. For the time discretization, we choose to $\theta=1/2$ in~\eqref{eq:wScheme}, i.e., a Crank-Nicholson type discretization. The resulting implicit equation is solved using a standard Newton iteration. The time step is set to $\Delta t=100 (\Delta x)^2$. To check convergence, we calculate a solution $w^{\Delta t^*}$ on a fine grid ($N=100\cdot 2^8-1$) and define the errors
\begin{subequations}\label{eq:errs}
\begin{align}
  err_p=\norm{\overline{w^{\Delta t}}(\cdot,T)-\overline{w^{\Delta t^*}}(\cdot,T)}_{L^p(\Omega)}\,,\quad \text{$p\in\{1,\infty\}$}\,,\\
  err_{1,p}=\norm{w_x^{\Delta t}(\cdot,T)-w_x^{\Delta t^*}(\cdot,T)}_{L^p(\Omega)}\,,\quad \text{$p\in\{1,\infty\}$}\,,
\end{align}
\end{subequations}
where $T=0.04$. Table~\ref{tab:ConvWodd} shows that the numerical solutions with an odd number of grid points converge to $w^{\Delta t^*}$ with rate~$\approx 1$.
\begin{table}
\centering
\begin{tabular}{l | l l l l}
$N+1$ & $err_1$ & $err_{1,1}$&$err_\infty$ &$err_{1,\infty} $\\ \hline 
$100\cdot 2^{0}$ & \num{1.2e-01} & \num{1.5e+00}  & \num{3.9e-01}  & \num{4.1e+00}  \\ 
$100\cdot 2^{1}$ & \num{7.4e-03} (4.0) & \num{6.1e-02} (4.6) & \num{1.3e-02} (4.8) & \num{9.6e-02} (5.4) \\ 
$100\cdot 2^{2}$ & \num{1.5e-03} (2.3) & \num{1.6e-02} (2.0) & \num{2.7e-03} (2.3) & \num{2.9e-02} (1.7) \\ 
$100\cdot 2^{3}$ & \num{5.4e-04} (1.4) & \num{6.6e-03} (1.2) & \num{9.8e-04} (1.4) & \num{1.3e-02} (1.1) \\ 
$100\cdot 2^{4}$ & \num{2.5e-04} (1.1) & \num{3.2e-03} (1.1) & \num{4.5e-04} (1.1) & \num{6.6e-03} (1.0) \\ 
$100\cdot 2^{5}$ & \num{1.2e-04} (1.1) & \num{1.5e-03} (1.0) & \num{2.1e-04} (1.1) & \num{3.3e-03} (1.0) \\ 
\end{tabular}  \\[.5em]
\caption{$L^1$ and $L^\infty$ errors and rates (in brackets) of the numerical solutions and their derivatives at time $T=0.04$ for the scheme based on $w$ with $k_1=0$, $k_2=1$, $\theta=1/2$, CFL number $\Delta t/(\Delta x)^2=100$, and an odd number of grid points.}
\label{tab:ConvWodd}
\end{table}

Next, we calculate numerical solutions for $k_1=10^{-n}$, $n=1,\dots,5$. If $k_1$ and $k_2$ are positive, the transformation $k_w$ is given by 
\begin{equation*}
  k_w(u)=\int_{\pi/2}^u \sqrt{k_1 \cos^2(\xi)+k_2 \sin^2(\xi)} \,d\xi= k_2\, E\Big(u-\frac{\pi}{2} \,\Big|
\, 1- \frac{k_1}{k_2} \Big),
\end{equation*}
where $E(u\,|\,m)$ is the elliptic integral of the second kind. Because the function $B(w)=c^2(\bar{k}_w(w))$ does not have an explicit form, another Newton iteration is needed to solve for $\bar{k}_w$. 
In practice, this significantly slows down the method  and a scheme based on~\eqref{eq:VHE} or~\eqref{eq:v} would be preferable. Figure~\ref{fig:k1lim} shows that for a fixed number of grid points\footnote{ In Figure~\ref{fig:k1lim} we chose $N=400$, but for other $N$, in particular also for odd $N$, the result is the same.}, as $k_1\rightarrow 0$, the solutions converge to the same $w^{\Delta t^*}$ as above.
\begin{figure}
  \centering
  \begin{subfigure}[b]{.45\textwidth}\centering
  \includegraphics[width=\textwidth]{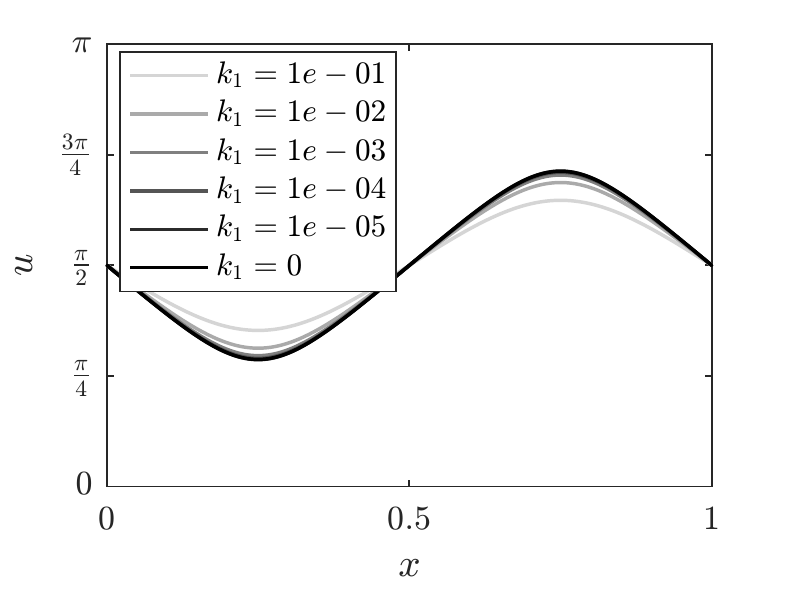}
\caption{$u$}    
  \end{subfigure}
  \begin{subfigure}[b]{.45\textwidth}\centering
  \includegraphics[width=\textwidth]{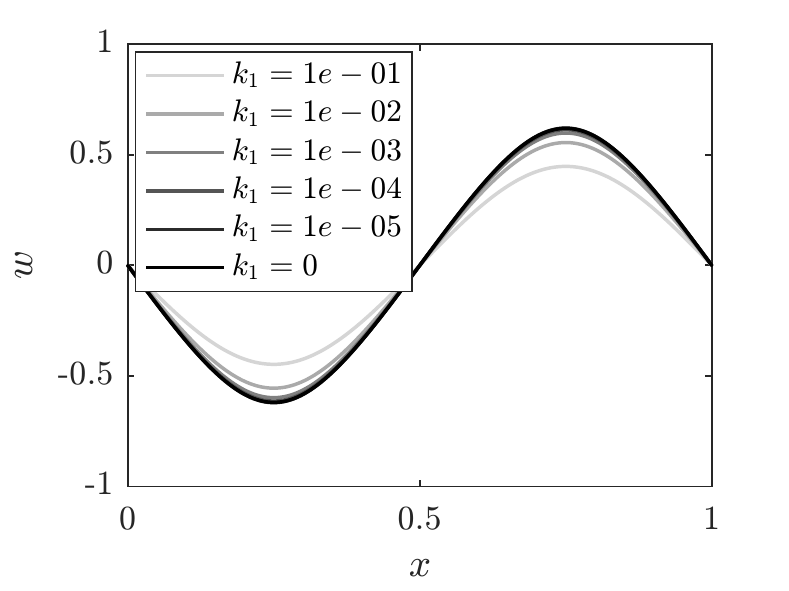}
\caption{$w$}    
  \end{subfigure}
  \caption{Convergence of solutions to viscosity solution as $k_1\rightarrow 0$. The plots show the solutions at $T=0.04$ for the scheme based on $w$ with $k_2=1$, $\theta=1/2$, CFL number $\Delta t/(\Delta x)^2=100$, and $N=400$ for $k_1>0$ and $N=399$ for $k_1=0$.}\label{fig:k1lim}
\end{figure}

Another way to obtain the viscosity solution is to use the transformation to $v$ variables,~\eqref{eq:v}. A straightforward scheme based on~\eqref{eq:v} is
\begin{equation}\label{eq:vScheme}
  \Dtp v^n_j=\Dp\big(A_-c^2(\bar{k}_v(v))^n_j\, \Dm v^n_j\big),,
\end{equation}
where $A_-c^2(\bar{k}_v(v))^n_j=\frac{1}{2}(c^2(\bar{k}_v(v^n_j))+c^2(\bar{k}_v(v^n_{j-1})))$.
For $c$ given by~\eqref{eq:cLC}, we have
\begin{equation*}
  k_v(u)=\int_{\pi/2}^u \frac{1}{\sqrt{k_1\cos^2(\xi)+k_2\sin^2(\xi)}}\,d\xi=\frac{1}{k_2} F\Big(u-\frac{\pi}{2}\, \Big|\, 1- \frac{k_1}{k_2} \Big)\,, 
\end{equation*}
where $F(u\,|\,m)$ is the elliptic integral of the first kind. Using Jacobi's amplitude function ``$\mathrm{am}$'', the inverse $\bar{k}_v$ can be expressed as
\begin{equation*}
  \bar{k}_v(v)=\mathrm{am}\Big(k_2 v\,\Big|\,1-\frac{k_1}{k_2} \Big)+ \frac{\pi}{2} \,.
\end{equation*}
For $k_1=0$ this method is only applicable if none of the grid points is a zero of $c(u_0(x))$, because $v$ would not be finite at such a point. Table~\ref{tab:vConv} shows the convergence of the $v$-based method to $w^{\Delta t^*}$ for an odd number of grid points and $k_1=0$. The errors in Table~\ref{tab:vConv} are calculated with $w^{\Delta t}$ and $w^{\Delta t^*}$ in~\eqref{eq:errs} replaced by the $u^{\Delta t}$ (the linear interpolation of $\bar{k}_v(v^n_j)$) and $u^{\Delta t^*}$ (the linear interpolation of $\bar{k}_w(w^n_j)$), respectively.
\begin{table}
\centering
\begin{tabular}{l | l l l l}
$N + 1$ & $err_1$ & $err_{1,1}$&$err_\infty$ &$err_{1,\infty}$\\ \hline 
$100\cdot 2^{0}$ & \num{5.2e-03}  & \num{6.1e-02}  & \num{1.0e-02}  & \num{1.3e-01}  \\ 
$100\cdot 2^{1}$ & \num{2.6e-03} (1.0) & \num{3.0e-02} (1.0) & \num{5.0e-03} (1.0) & \num{6.6e-02} (1.0) \\ 
$100\cdot 2^{2}$ & \num{1.3e-03} (1.0) & \num{1.5e-02} (1.0) & \num{2.5e-03} (1.0) & \num{3.3e-02} (1.0) \\ 
$100\cdot 2^{3}$ & \num{6.1e-04} (1.0) & \num{7.3e-03} (1.0) & \num{1.2e-03} (1.0) & \num{1.7e-02} (1.0) \\ 
$100\cdot 2^{4}$ & \num{2.9e-04} (1.1) & \num{3.6e-03} (1.0) & \num{5.8e-04} (1.1) & \num{8.2e-03} (1.0) \\ 
$100\cdot 2^{5}$ & \num{1.4e-04} (1.1) & \num{1.7e-03} (1.1) & \num{2.7e-04} (1.1) & \num{3.9e-03} (1.1) \\ 
\end{tabular}\\[.5em]
\caption{$L^1$ and $L^\infty$ errors and rates (in brackets) of the numerical solutions and their derivatives compared to the ``viscosity solution''  at time $T=0.04$ for the scheme based on $v$,~\eqref{eq:vScheme}, with $k_1=0$, $k_2=1$, $\theta=1/2$, CFL number $\Delta t/(\Delta x)^2=100$, and an odd number of grid points.}
\label{tab:vConv}
\end{table}

Finally, we construct a weak solution of the $w$-equation different from the viscosity solution $w^{\Delta t^*}$ by choosing an even number of grid points in the scheme defined by~\eqref{eq:wScheme}. By definition, if $B(w^0_j)=0$, we have $B(w^n_j)=0$ for all $n$. This differs from the solution above, where at $T=0.04$ we have $B(w^{\Delta t^*}(x,T))>0$ at all $x$. Figures~\ref{fig:Evol}--\ref{fig:Evol2} show the evolution of the two solutions in time. The errors in Table~\ref{tab:ConvWeven} are calculated as in~\eqref{eq:errs}, with $w^{\Delta t^*}$ replaced by the numerical solution for $N=100\cdot 2^8$ grid points. The results confirm the convergence of~\eqref{eq:wScheme} for an even number of grid points. The decreasing convergence rates for the derivatives are due to the fact that the error is calculated using an approximation of the exact solution. 
Intuitively, the second solution corresponds to solutions of several Dirichlet boundary value problems with the boundary points given by the points where $B(w^0_j)=0$.  As $T\rightarrow \infty$, the function $w(x,t)$ converges to
\begin{equation*}
  w_\infty(x)= \begin{cases} -4 x\,, & \text{for $x\in [0,\frac{1}{4} ]$,}\\
    4 x- 2 \,, &\text{for $x\in [\frac{1}{4} ,\frac{3}{4} ]$,}\\ 
4 - 4  x\,,&\text{for $x \in [\frac{3}{4} ,1]$,} \end{cases}
\end{equation*}
which means that $u(x,t)$ tends to
\begin{equation*}
  u_\infty(x)= \begin{cases} \arccos(4x)\,, & \text{for $x\in [0,\frac{1}{4} ]$,}\\
   \arccos(2-4x) \,, &\text{for $x\in [\frac{1}{4} ,\frac{3}{4} ]$,}\\ 
\arccos(4x-4)\,,&\text{for $x \in [\frac{3}{4} ,1]$,} \end{cases}  
\end{equation*}
 and thus $u_{x}(x,t)\rightarrow \infty$ at $x=\frac{1}{4}$ and $\frac{3}{4}$ as $t\rightarrow \infty$.
\begin{figure}
  \centering
  \begin{subfigure}[b]{.45\textwidth}\centering
  \includegraphics[width=\textwidth]{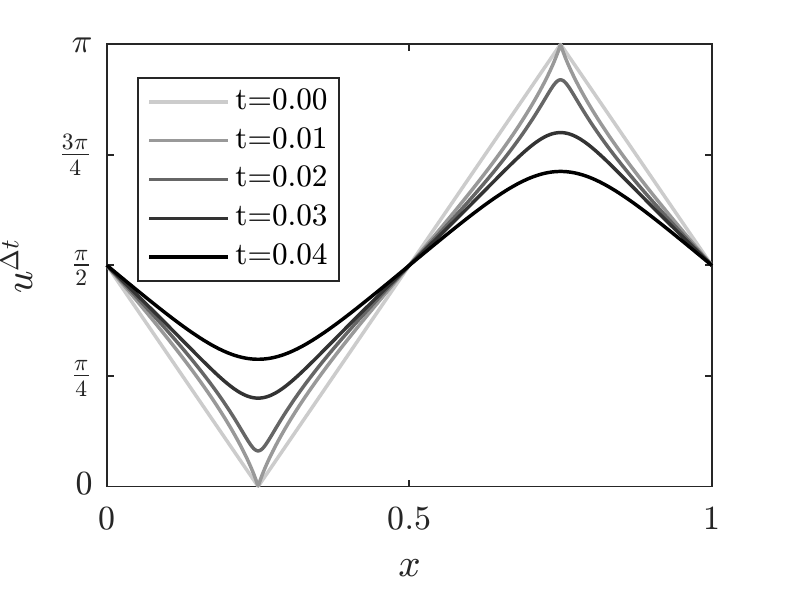}
\caption{$u$}    
  \end{subfigure}
  \begin{subfigure}[b]{.45\textwidth}\centering
  \includegraphics[width=\textwidth]{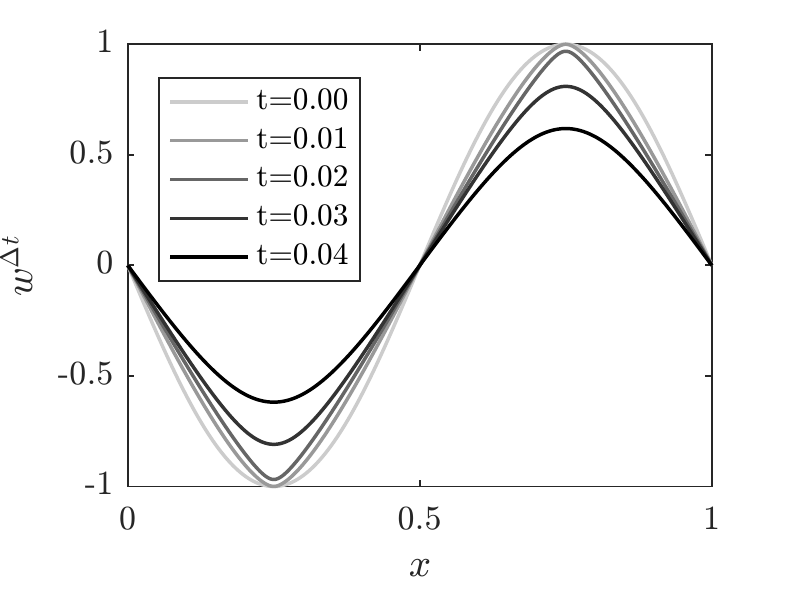}
\caption{$w$}    
  \end{subfigure}
  \caption{Time evolution of the viscosity solution $w^{\Delta t^*}$ (limit when $N$ is odd)}\label{fig:Evol}
\end{figure}
\begin{figure}
  \centering
  \begin{subfigure}[b]{.45\textwidth}\centering
  \includegraphics[width=\textwidth]{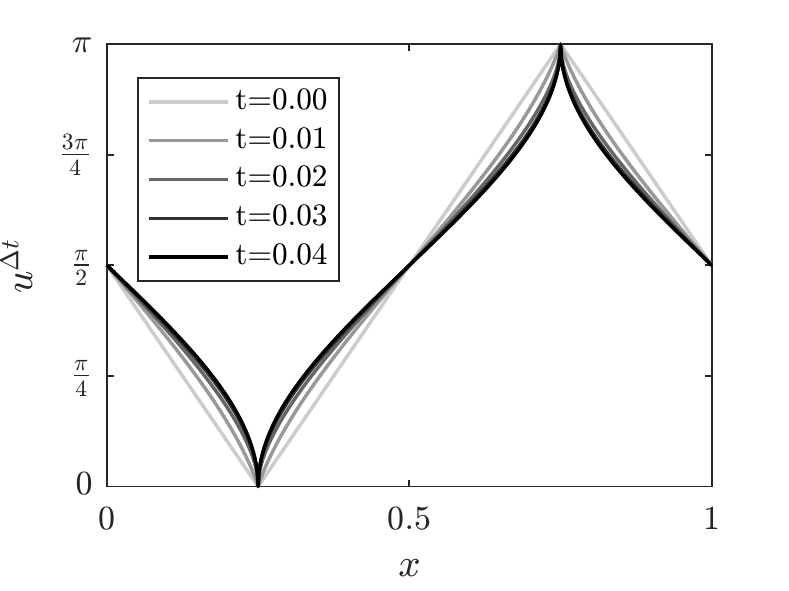}
\caption{$u$}    
  \end{subfigure}
  \begin{subfigure}[b]{.45\textwidth}\centering
  \includegraphics[width=\textwidth]{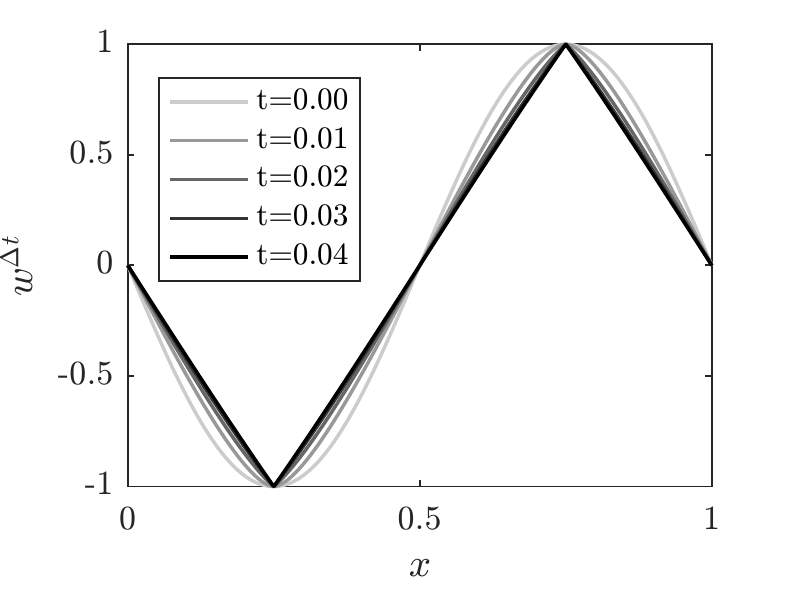}
\caption{$w$}    
  \end{subfigure}
  \caption{Time evolution of the second weak solution (limit when $N$ is even)}\label{fig:Evol2}
\end{figure}
\begin{table}
\centering
\begin{tabular}{l | l l l l}
$N$ & $err_1$ & $err_{1,1} $&$err_\infty$ &$err_{1,\infty} $\\ \hline 
$100\cdot 2^{0}$ & \num{1.3e-03}  & \num{1.9e-02}  & \num{2.3e-03}  & \num{5.1e-02}  \\ 
$100\cdot 2^{1}$ & \num{1.1e-04} (3.6) & \num{2.3e-03} (3.1) & \num{1.7e-04} (3.8) & \num{7.4e-03} (2.8) \\ 
$100\cdot 2^{2}$ & \num{1.3e-05} (3.0) & \num{9.3e-04} (1.3) & \num{2.0e-05} (3.0) & \num{2.1e-03} (1.8) \\ 
$100\cdot 2^{3}$ & \num{2.7e-06} (2.3) & \num{4.7e-04} (1.0) & \num{4.1e-06} (2.3) & \num{1.0e-03} (1.1) \\ 
$100\cdot 2^{4}$ & \num{6.9e-07} (2.0) & \num{2.7e-04} (0.8) & \num{1.1e-06} (2.0) & \num{6.0e-04} (0.7) \\ 
$100\cdot 2^{5}$ & \num{1.9e-07} (1.9) & \num{1.9e-04} (0.5) & \num{2.9e-07} (1.9) & \num{3.8e-04} (0.7) \\ 
\end{tabular}  \\[.5em]
\caption{$L^1$ and $L^\infty$ errors and rates (in brackets) of the numerical solutions and their derivatives at time $T=0.04$ for the scheme based on $w$ with $k_1=0$, $k_2=1$, $\theta=1/2$, CFL number $\Delta t/(\Delta x)^2=100$, and an even number of grid points.}
\label{tab:ConvWeven}
\end{table}

%\bibliographystyle{plain}
%\bibliography{literature}

\end{document}